\theoremstyle{definition}
\theoremstyle{remark}
\newtheorem{lemma}{Lemma}[section]
\newtheorem{theorem}[lemma]{Theorem}
\newtheorem{proposition}[lemma]{Proposition}
\newtheorem{definition}[lemma]{Definition}
\newtheorem{remark}[lemma]{Remark}
\newtheorem{example}[lemma]{Example}
\newtheorem{notation}[lemma]{Notation}
\newtheorem{convention}[lemma]{Convention}
\newtheorem{discussion}[lemma]{Discussion}
\newtheorem{question}[lemma]{Question}
\newtheorem*{theorem1}{Theorem A}
\numberwithin{equation}{section}
\newcommand{\Z}{\mathbb{Z}}
\newcommand{\x}{\mathbf{x}}
\newcommand{\bt}{\mathbf{t}}
\newcommand{\balpha}{\bm{\alpha}}
\newcommand{\bgamma}{\bm{\gamma}}
\newcommand{\bT}{\mathbf{T}}
\newcommand{\calI}{\mathcal{I}}
\newcommand{\calB}{\mathcal{B}}
\DeclareMathOperator{\pd}{pd}
\newcommand{\lex}{\text{\footnotesize{lex}}}
\newcommand{\grevlex}{\text{\footnotesize{grevlex}}}
\DeclareMathOperator{\Borel}{Borel}
\begin{document}

\title{Equations of the multi-Rees algebra of fattened coordinate subspaces}

\author{Babak Jabbar Nezhad}
\address{Istanbul, Turkey}
\email{babak.jab@gmail.com}

\subjclass[2010]{Primary 13A30,13P10,05E40}

\date{}

\dedicatory{}

\keywords{Gr\"{o}bner bases, multi-Rees algebra, toric ring, multi-fiber ring}
\thanks{Babak Jabbar Nezhad has also published under the name Babak Jabarnejad~\cite{jabarnejad2016rees}}

\begin{abstract}
	In this paper we describe the equations defining the multi-Rees algebra $k[x_1,\dots,x_n][I_1^{a_1}t_1,\dots,I_r^{a_r}t_r]$, where the ideals are generated by subsets of $x_1,\dots,x_n$. We also show that a family of binomials whose leading terms are squrefree, form a Gr\"{o}bner basis for the defining equations with lexicographic order. We show that if we remove binomials that include $x$'s, then remaining binomials form a Gr\"{o}bner basis for the toric ideal associated to the multi-fiber ring. However binomials, including $x$'s, in Gr\"{o}bner basis of defining equations of the multi-Rees algebra are not necessarily defining equations of corresponding symmetric algebra. Despite this fact, we show that this family of ideals is of multi-fiber type.
\end{abstract}

\maketitle

\section{Introduction}

Let $R$ be a Noetherian ring, and let $s_1,\dots,s_n$ be generators of the ideal $I$. We define the homomorphism $\phi$ from the polynomial ring $S=R[T_1,\dots,T_n]$ to the Rees algebra $R[It]$ by sending $T_i$ to $s_it$. Then $R[It]\cong S/\ker(\phi)$. The generating set of $\ker(\phi)$ is referred to as the defining equations of the Rees algebra $R[It]$. Finding these generating sets is a tough problem which is open for most classes of ideals. Some papers about this problem are \cite{vasconcelos1991rees}, \cite{vasconcelosulrich1993rees}, \cite{morey1996rees}, \cite{moreyulrich1996rees}, \cite{kustinpoliniulrich2017blowup}.

More generally, given any ideals $I_1,\dots,I_r$ in a ring $R$, one would like to describe the equations of the multi-Rees algebra $R[I_1t_1,I_2t_2,\dots,I_rt_r]$. Indeed, the multi-Rees algebra in question is simply the Rees algebra of the module $I_1\oplus I_2\oplus\dots\oplus I_r$. However, in our work, we make no serious use of this theory. There is little work on the defining equations of the multi-Rees algebra compared to the ordinary Rees algebra. Another motivation for investigating the multi-Rees algebra is an illustration of the theory of Rees algebra of modules \cite{eisenbud2003rees}, \cite{simis2003rees}. Some works about defining equations of the multi-Rees algebra included in~\cite{ribbe1999defining}, \cite{LP14}, \cite{Sosa14}, \cite{BC17}, \cite{jabarnejad2016rees}, \cite{BC17b}, \cite{CLS19}, \cite{DJ20}.

In this paper we determine the equations of the multi-Rees algebra $R[I_1^{a_1}t_1,I_2^{a_2}t_2,\dots,I_r^{a_r}t_r]$, where $R=k[x_1,\dots,x_n]$ ($k$ a field) and ideals $I_i$ are generated by subsets of $x_1,\dots,x_n$. We present the concept of binary quasi-minors and we show that some explicit binary quasi-minors, whose leading terms are squrefree, form a Gr\"{o}bner basis with lexicographic order for defining equations. The degree of these binomials is at most $r+1$. Also, we show that if we remove binary quasi-minors that include $x$'s from Gr\"{o}bner basis, then the rest form Gr\"{o}bner basis for the toric ideal of multi-fiber ring $k[I_1^{a_1}t_1,I_2^{a_2}t_2,\dots,I_r^{a_r}t_r]$. We show that in general, if we add equations of the symmetric algebra to Gr\"{o}bner basis of toric ideal associated to the toric ring $k[I_1^{a_1}t_1,I_2^{a_2}t_2,\dots,I_r^{a_r}t_r]$, then they don't necessarily form a Gr\"{o}bner basis for the defining equations of the multi-Rees algebra. That means some of binary quasi-minors, including $x$'s, in the Gr\"{o}bner basis of defining equations of the multi-Rees algebra are not defining equations of the symmetric algebra. This is shown in Example~\ref{counter}. Note that the ideals in discussion, individually satisfy $l$-exchange property (see~\cite{HHV05}) with any monomial order. We know that in the case of Rees algebra when a monomial ideal $I$ is generated by monomials of one degree, and it satisfies $l$-exchange property, then Gr\"{o}bner basis of the defining equations of the Rees algebra $R[It]$ is formed by Gr\"{o}bner basis of defining equations of the toric ideal of toric ring $k[I]$ plus some equations of the symmetric algebra~\cite[Theorem 5.1]{HHV05}. However, when it comes to just defining equations, we show that if we add equations of symmetric algebra to defining equations of the toric ideal associated to the multi-fiber ring, then we obtain defining equations of the multi-Rees algebra. Then this family of ideals is of multi-fiber type. Notice that when powers of all ideals coincide 1, the multi-fiber ring of theses ideals is just the toric ring of edge ideals which is a well-known concept. Defining equations of the toric ideal of edge ideals is already studied in many papers including~\cite{V95}, \cite{DST95}, \cite{OHT13}, \cite{OH98}, \cite{OH99}, \cite{RTT12}, \cite{SVV98}.

We can summarize the main result of this paper as below.

\begin{theorem1}
	Let $R=k[x_1,\dots,x_n]$ and suppose that ideals $I_i$ are generated by subsets of $x_1,\dots,x_n$. Then there is a quasi-matrix $D$, whose entries are certain indeterminates, such that the multi-Rees algebra $R[I_1^{a_1}t_1,\dots,I_r^{a_r}t_r]$ is defined by the ideal generated by all binary quasi-minors of $[\underline{x}|D]$. Also, an explicit subset of these binary quasi-minors form a Gr\"{o}bner basis with lexicographic order whose leading terms are squarefree.
\end{theorem1}

As this concept could be seen as a specialization of toric ring of edge ideals, one may wonder whether we can describe defining equations of the multi-Rees algebra using graph theory. We define a bipartite graph associated to the multi-Rees algebra of these ideals and we describe defining equations using cycles of this graph. 

To prove the main result, first, we use a result in~\cite{DJ20} to show one case. Next, for more general case, we build a directed bipartite graph (this graph is different than the graph that we use to describe equations, but both graphs come from the same concept) and we find cycles of this graph and then we prove the theorem.

Actually, we can associate a bipartite graph to the multi-Rees algebra $k[x_1,\dots,x_n][I_1^{a_1}t_1,\dots,I_r^{a_r}t_r]$, discussed in this paper, as follows: one partition of vertices is formed by $t_1,\dots,t_r$. Another partition is formed by $x_1,\dots,x_n$. We attach $x_i$'s which divide generators of $I_j^{a_j}$ to $t_j$. When this graph is chordal (that means every cycle of length greater than or equal to 6 has a chord), then Gr\"{o}bner basis for defining equations of the multi-Rees algebra is the way described in~\cite{DJ20}. In the present paper we describe the Gr\"{o}bner basis for all cases, including non-chordal cases.

\begin{example}\label{ex-non-chordal}
	Consider the polynomial ring $R=k[x_1,x_2,x_3]$. Let $I_1=\langle x_1,x_2\rangle$, $I_2=\langle x_2,x_3\rangle$, $I_3=\langle x_1,x_3\rangle$. We consider the multi-Rees algebra $R[I_1^2t_1,I_2^2t_2,I_3^2t_3]$. Then incidence bipartite graph corresponding to this multi-Rees algebra is shown in Figure~\ref{non-chordal}. As we see this graph is non-chordal.
	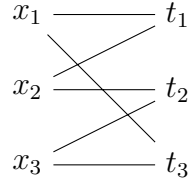
\begin{figure}
		\centering
		\begin{tikzpicture}
		\node (x1) at (0,4) {$x_1$};
		\node (x2) at (0,3) {$x_2$};
		\node (x3) at (0,2) {$x_3$};
		
		\node (t1) at (2,4) {$t_1$};
		\node (t2) at (2,3) {$t_2$};
		\node (t3) at (2,2) {$t_3$};
		
		\draw (t1)--(x2) (t1)--(x1) (t2)--(x3) (t2)--(x2) (t3)--(x1) (t3)--(x3);
		\end{tikzpicture}
		\caption{The bipartite incidence graph for Example~\ref{ex-non-chordal}}
		\label{non-chordal}	
	\end{figure}
\end{example}

Also, in~\cite{DJ20}, it is proved in more generality that if the incidence bipartite graph is chordal, then the multi-Rees algebra is Koszul. In the last section we show that in our case if the incidence bipartite graph is non-chordal, then the multi-Rees algebra is not Koszul. We also pose a question for interested readers.
\section{Background}\label{toricsection}

This section is taken from~\cite{DJ20}, as we need these results to prove our main result.

Let $G$ be a finite collection of monomials of positive degree in the polynomial ring $k[x_1,\dots,x_n]$ ($k$ is a field). We denote $\x=x_1,\dots,x_n$. Let $S$ be the polynomial ring $S:=k[T_m:m\in G]$. We define the toric map associated to $G$ as the map $\phi_G: S\to k[\x]$, where $\phi_G(T_m)=m$. We denote $J_G$ for the kernel of $\phi_G$; clearly this is a toric ideal associated to $G$. Given $\bgamma=(\gamma_m)\in\Z_{\ge 0}^{G}$, we write $\bT^{\bgamma}$ for $\prod_{m\in G}T_m^{\gamma_m}$, where $T_m$ is a variable of the polynomial ring $S=k[T_m:m\in G]$. If $\mu\in k[G]$ is a monomial, then
\[
S_\mu=\mbox{span}_k\{\bT^{\bgamma}: \phi_G(\bT^{\bgamma})=\mu\}.
\]

We now introduce a combinatorial device from~\cite{DJ20}, which we call the \textit{fiber graph} of the toric map $\phi_G$ at the monomial $\mu$.

\begin{definition}\cite{DJ20}\label{def:FiberGraph}
	Let $G$ be a finite collection of monomials of positive degree in the polynomial ring $k[\x]$, $J_G$ the toric ideal of $G$, and $\calB\subset J_G$ a finite collection of binomials. The fiber graph of $\phi_G$ at $\mu$ with respect to $\calB$ is the graph $\Gamma_{\mu,\calB}$ whose vertices are monomials $\bT^{\bgamma}\in S_\mu$ with an edge connecting $\bT^{\bgamma},\bT^{\bgamma'}\in S_\mu$ if $\bT^{\bgamma}-\bT^{\bgamma'}$ is a multiple of a binomial from $\calB$.
	
	Moreover, if $\prec$ is a monomial order on $S$, then $\vec{\Gamma}_{\mu,\calB}$ is the graph $\Gamma_{\mu,\calB}$ with edges directed from the larger monomial to the smaller. That is, if $\bT^{\bgamma},\bT^{\bgamma'}\in S_\mu$ are connected by an edge in $\Gamma_{\mu,\calB}$ and $\bT^{\bgamma'}\prec\bT^{\bgamma}$, then we get the directed edge $\bT^{\bgamma}\to \bT^{\bgamma'}$.
\end{definition}

\begin{remark}
	We suppress the collection $G$ of monomials of $k[\x]$ in the notation for $\Gamma_{\mu,\calB}$, assuming that the underlying toric map is understood from context.
\end{remark}

\begin{proposition}\cite{DJ20}\label{prop:Graph}
	Let $\phi_G:S\to k[\x]$ be a toric map and $\calB$ a collection of binomials from the toric ideal $J=J_G$. If $S$ is equipped with a monomial order $\prec$, then the following are equivalent:
	\begin{enumerate}
		\item The binomials in $\calB$ form a Gr\"obner basis for $J_G$ under $\prec$.
		\item Every nonempty graph $\vec{\Gamma}_{\mu,\calB}$ has a unique sink.
	\end{enumerate}
\end{proposition}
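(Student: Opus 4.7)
The plan is to exploit the fact that $J_G$ is a binomial (indeed toric) ideal, so the Gröbner basis condition decomposes fiber-by-fiber: an element of $J_G$ with leading monomial $\bT^\bgamma$ must involve at least one other monomial in the same fiber $S_\mu$ (where $\mu = \phi_G(\bT^\bgamma)$), because $\phi_G$ annihilates $f$ and distinct fibers $S_\mu, S_{\mu'}$ meet trivially. I would first record two easy observations that make the statements match up: sinks of $\vec\Gamma_{\mu,\calB}$ are exactly the monomials $\bT^\bgamma \in S_\mu$ such that no $\mathrm{in}_\prec(b)$ for $b \in \calB$ divides $\bT^\bgamma$, and since edges are strictly decreasing in $\prec$ and $S_\mu$ is finite, every connected component of $\vec\Gamma_{\mu,\calB}$ has at least one sink. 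In particular, having a \emph{unique} sink in $\vec\Gamma_{\mu,\calB}$ forces that graph to be connected, with the sink being its $\prec$-minimum.

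For $(1)\Rightarrow(2)$ I would argue by contradiction. Assume $\calB$ is a Gröbner basis and suppose $\vec\Gamma_{\mu,\calB}$ has two distinct sinks $\bT^{\bgamma_1}, \bT^{\bgamma_2}$, WLOG $\bT^{\bgamma_1} \succ \bT^{\bgamma_2}$. Their difference lies in $J_G$ because $\phi_G(\bT^{\bgamma_1}) = \mu = \phi_G(\bT^{\bgamma_2})$, so by the Gröbner basis property, some $\mathrm{in}_\prec(b) = \bT^\alpha$ for $b = \bT^\alpha - \bT^\beta \in \calB$ divides $\bT^{\bgamma_1}$. Then $\bT^{\bgamma_1} \to (\bT^{\bgamma_1}/\bT^\alpha)\bT^\beta$ is a directed edge in $\vec\Gamma_{\mu,\calB}$ (the target still lies in $S_\mu$ because subtracting a multiple of $b \in J_G$ preserves $\phi_G$-image), contradicting that $\bT^{\bgamma_1}$ is a sink.

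For $(2)\Rightarrow(1)$ I would take any nonzero $f \in J_G$ and show $\mathrm{in}_\prec(f)$ is divisible by $\mathrm{in}_\prec(b)$ for some $b \in \calB$. Set $\bT^\bgamma = \mathrm{in}_\prec(f)$ and $\mu = \phi_G(\bT^\bgamma)$. Applying $\phi_G$ to $f$ and equating the coefficient of $\mu$ to zero forces another monomial in $\mathrm{supp}(f) \cap S_\mu$ that is strictly $\prec$-smaller than $\bT^\bgamma$; in particular $\vec\Gamma_{\mu,\calB}$ contains at least two vertices and $\bT^\bgamma$ is not its $\prec$-minimum. By hypothesis the unique sink is that $\prec$-minimum, so $\bT^\bgamma$ is not a sink; it therefore has an outgoing edge, which by construction is produced by multiplying some $b = \bT^\alpha - \bT^\beta \in \calB$ (with $\bT^\alpha \succ \bT^\beta$) by a monomial, so $\mathrm{in}_\prec(b) = \bT^\alpha$ divides $\bT^\bgamma = \mathrm{in}_\prec(f)$.

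The only real friction I anticipate is the bookkeeping identifying \emph{outgoing edges at} $\bT^\bgamma$ with \emph{leading terms of elements of} $\calB$ \emph{dividing} $\bT^\bgamma$: one has to check that every edge $\bT^\bgamma \to \bT^{\bgamma'}$ in $\vec\Gamma_{\mu,\calB}$ really comes from writing $\bT^\bgamma - \bT^{\bgamma'} = \bT^\delta(\bT^\alpha - \bT^\beta)$ with $\bT^\alpha \succ \bT^\beta$ (so that the leading term in play is genuinely $\mathrm{in}_\prec(b)$), and conversely that divisibility of $\bT^\bgamma$ by $\mathrm{in}_\prec(b)$ gives a bona fide outgoing edge whose target lies in $S_\mu$. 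Once that dictionary is nailed down, both directions are short, and the global existence of sinks follows from finiteness of $S_\mu$ together with the strict $\prec$-decrease along edges.
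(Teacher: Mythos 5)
The paper states this proposition only as a citation from \cite{DJ20} and supplies no proof of its own, so there is nothing internal to compare against; your argument is the standard fiber-graph characterization of Gr\"obner bases of toric ideals (as in Sturmfels) and is, in essence, the proof given in the cited source. It is correct as written: the dictionary between outgoing edges at $\bT^{\bgamma}$ and divisibility of $\bT^{\bgamma}$ by some $\mathrm{in}_\prec(b)$, $b\in\calB$, holds in both directions (even if ``multiple'' is read as polynomial multiple, since $\mathrm{in}_\prec(fb)=\mathrm{in}_\prec(f)\mathrm{in}_\prec(b)$), and the existence of a second support monomial of $f$ in the fiber of $\mathrm{in}_\prec(f)$ is the standard fact that toric ideals are spanned by fiber binomials.
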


Let the ideal $J$ be generated by a subset of $\x$. Let $I=J^m$, $\bT=\{T_m:m\in I\}$, and $\phi:k[\bT]\to k[\x]$ be defined by $\phi(T_m)=m$. Order the variables of $k[\bT]$ by $T_{m}\succ T_{m'}$ if $m\succ_{\grevlex} m'$.  Let $\prec_\lex$ be the lexicographic monomial order on $k[\bT]$ with respect to this ordering of the variables. Set 
\begin{gather*}\label{eq:ReesAlgebraGB}
\calB=\{T_m T_n-T_{\frac{x_i}{x_j}m}T_{\frac{x_j}{x_i}n}:x_j\mid m,x_i\mid n\}.
\end{gather*}
\begin{theorem}\cite{DJ20}\label{toricidealcase}
	With assumptions above, $\calB$ is a Gr\"obner basis for $J_{k[I]}$ with respect to $\prec_\lex$.
\end{theorem}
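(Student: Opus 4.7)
The plan is to apply Proposition~\ref{prop:Graph}: it suffices to show every nonempty fiber graph $\vec{\Gamma}_{\mu,\calB}$ has a unique sink. A vertex of $\vec{\Gamma}_{\mu,\calB}$ is a factorization $\mu = m_1 \cdots m_k$ of $\mu$ into $k$ generators of $I = J^m$, and each binomial of $\calB$ alters exactly two of these factors while leaving the others fixed.

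The first key observation is pair-locality: the lex comparison between two vertices that differ only in the factors at positions $i,j$ reduces to the lex comparison of the two-factor monomials $T_{m_i} T_{m_j}$ versus $T_{m_i'} T_{m_j'}$. Therefore $\bT^{\bgamma} = \prod_{\ell} T_{m_\ell}$ is a sink in $\vec{\Gamma}_{\mu,\calB}$ if and only if for every pair $(m_i, m_j)$ the pair is itself a sink in the two-factor fiber graph $\vec{\Gamma}_{m_i m_j, \calB}$. The two-factor case is clean: the fiber of a degree-$2m$ monomial $\nu$ is parametrized by the exponent vector of one of its factors, and $\prec_{\lex}$ restricts to a total order on this finite set, so the two-factor sink is unique. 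One can describe it greedily by going through the generators of $I$ in decreasing $\succ_{\grevlex}$ order as $m^{(1)} \succ m^{(2)} \succ \cdots$ and choosing at each stage the minimum multiplicity of $T_{m^{(i)}}$ consistent with the previous choices.

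To establish global uniqueness, I would apply the analogous greedy procedure across all $k$ factors, yielding the unique lex-minimum factorization of $\mu$, which is automatically a sink. To conclude that every sink coincides with this lex-minimum, I would prove a confluence property for $\calB$: whenever a vertex admits two outgoing decreasing edges, the two resulting vertices have a common descendant. Combined with the connectedness of the undirected fiber graph (a standard fact for Veronese-type rings), confluence forces all paths of decreasing moves to eventually terminate at the same sink, giving global uniqueness. The main obstacle is the confluence check: at every vertex with two applicable exchanges, one must construct an explicit sequence of binomials in $\calB$ that brings the two outcomes to a common configuration. This amounts to a finite case analysis based on whether the two exchanges involve disjoint pairs of factors (in which case confluence is immediate) or share a common factor (where a short explicit computation using a further binomial in $\calB$ is required).
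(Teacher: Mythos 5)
The paper does not actually prove this statement; it is imported verbatim from \cite{DJ20}, so there is no in-paper argument to compare yours against. Judged on its own terms, your proposal sets up the right framework: invoking Proposition~\ref{prop:Graph} and reducing to pairs of factors is sound (for any monomial order, $PQ\succ PQ'$ iff $Q\succ Q'$, so a vertex is a sink iff each of its two-factor subproducts is a sink in its own fiber graph). The problem is that the two steps carrying all the content are asserted rather than proved, and the justification you give for the first one is a non sequitur. A sink of $\vec{\Gamma}_{\nu,\calB}$ is a vertex with no \emph{outgoing} edges, i.e.\ a local minimum under the exchange moves, not a global minimum; the fact that $\prec_{\lex}$ restricts to a total order on the finite fiber gives a unique global minimum but says nothing about whether a non-minimal pair $T_aT_b$ must admit a move from $\calB$ that strictly decreases it. That implication is the heart of the two-factor case: one must show that if $T_aT_b$ is not the lex-minimum of $S_{ab}$, then there exist $x_j\mid a$ and $x_i\mid b$ with $T_{\frac{x_i}{x_j}a}T_{\frac{x_j}{x_i}b}\prec_{\lex} T_aT_b$. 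This genuinely depends on ordering the $T$'s by $\succ_{\grevlex}$ on the underlying monomials --- for other orderings of the variables $T_m$ the same $\calB$ need not be a Gr\"obner basis --- yet your argument never uses grevlex anywhere.

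The global step has a similar character. Connectedness of the undirected fiber graphs under $\calB$-moves is equivalent to $\calB$ generating the toric ideal, which is a substantial part of what is being proved; it is classical for Veronese-type subrings but cannot be waved off as standard without an argument or citation. The local confluence check in the shared-factor case is exactly where the combinatorial work lives, and it is left entirely open. Moreover, the whole confluence machinery is avoidable: once the two-factor statement is actually established, a global sink is precisely a factorization all of whose pairs are two-factor sinks, existence of a sink is automatic because every edge strictly decreases a well-ordered quantity, and uniqueness reduces to showing that a pairwise-minimal ("sorted") multiset of factors of $\mu$ is uniquely determined by $\mu$ --- a sorting-uniqueness lemma that is also absent from your write-up. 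As it stands, the proposal is a reasonable plan whose two decisive lemmas remain unproven.
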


\section{Binary quasi-minors}

\begin{definition}
	An $n\times m$ quasi-matrix over a ring $R$ is a rectangular array with $n$ rows and $m$ columns such that some entries may be empty.
	
	A subquasi-matrix is a quasi-matrix that is obtained by deleting some rows, columns, or elements of a quasi-matrix.
\end{definition}

\begin{example}
	$$
	A=\begin{bmatrix*}
	a& &b\\
	c&d& \\
	e&f&g
	\end{bmatrix*}
	$$
	is a quasi-matrix and $\begin{bmatrix*}
	a& &b\\
	&d& 
	\end{bmatrix*}$ is a subquasi-matrix of $A$.
\end{example}

\begin{definition}
	A binary quasi-matrix is a quasi-matrix having exactly two elements in each nonempty row and column.
\end{definition}

\begin{example}
	All $3\times 3$ binary quasi-matrices are listed below:
	\begin{gather*}
	\begin{bmatrix*}
	a&b&\\
	&c&d\\
	e&&f
	\end{bmatrix*},\
	\begin{bmatrix*}
	a&b&\\
	c&&d\\
	&e&f
	\end{bmatrix*},\
	\begin{bmatrix*}
	a& &b\\
	&c&d\\
	e&f&
	\end{bmatrix*},\
	\begin{bmatrix*}
	a& &b\\
	c&d& \\
	&e&f
	\end{bmatrix*},\
	\begin{bmatrix*}
	&a&b\\
	c&d& \\
	e&&f
	\end{bmatrix*},\
	\begin{bmatrix*}
	&a&b\\
	c&&d \\
	e&f&
	\end{bmatrix*}
	\end{gather*}
\end{example}

Note that a binary quasi-matrix is a square matrix, up to deleting an empty row or column. Since we usually identify a quasi-matrix canonically with the one obtained by deleting any empty row or column, in the sequel we usually consider a binary quasi-matrix as a square matrix.

\begin{definition}
	Let $A=(a_{ij})$ be an $n\times n$ binary quasi-matrix over a ring $R$. A binary quasi-determinant of $A$ is an element
	$$
	a_{1\sigma(1)}a_{2\sigma(2)}\dots a_{n\sigma(n)}-a_{1\tau(1)}a_{2\tau(2)}\dots a_{n\tau(n)}
	$$
	where $\sigma,\tau$ are permutations of $\{1,2,\dots,n\}$ such that $\sigma(l)\neq\tau(l)$ for all $1\le l\le n$. A binary quasi-determinant of a binary subquasi-matrix $A$ is called a binary quasi-minor of $A$.
\end{definition}

Note that by definition, if $\delta$ is a binary quasi-determinant of a quasi-matrix, then so is $-\delta$. In the sequel, we will usually consider a given binary quasi-minor up to sign.

\begin{remark}
	(1) Note that the quasi-determinant of a $2\times 2$ binary quasi-matrix is equal to its determinant, up to sign. Hence all $2\times 2$ minors (which exist) of a quasi-matrix are binary quasi-minors.
	
	(2) Note that a quasi-determinant of a $3\times 3$ binary quasi-matrix is uniquely determined up to sign. However, in general it is not equal to the determinant, even up to sign, of
	the matrix obtained by assigning value zero to all empty positions.
	
	(3) For $n\ge4$, a quasi-determinant of a binary $n\times n$ quasi-matrix is not even unique, up to sign. For example, consider the following binary quasi-matrix
	$$
	\begin{bmatrix*}
	a&b& & \\
	c&d& & \\
	& &e&f\\
	& &g&h
	\end{bmatrix*}.
	$$
	Then $adeh-bcgf$ and $adgf-bceh$ are both quasi-determinants.
\end{remark}

\begin{notation}
	If $A$ is a quasi-matrix with entries in $R$, then we denote the ideal generated by the binary quasi-minors of $A$ by $I_{bin}(A)$. 
\end{notation}

\begin{example}
	Consider the quasi-matrix $A$ as below:
	$$
	A=\begin{bmatrix*}
	a&b& \\
	c&d&e\\
	f& &g
	\end{bmatrix*},
	$$
	then $adg-bef$, $bef-adg$, $ad-bc$, $bc-ad$, $cg-fe$, and $fe-cg$ are all binary quasi-minors of $A$.
\end{example}

The next elementary result shows that the ideal of binary quasi-minors generalizes the classical ideal of $2\times 2$ minors.

\begin{proposition}\label{minor-binary-quasi-minor} 
	Let A be a matrix. Then $I_{bin}(A)=I_2(A)$.
\end{proposition}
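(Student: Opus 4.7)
The plan is to establish both inclusions, with the nontrivial one being $I_{bin}(A) \subseteq I_2(A)$.

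The inclusion $I_2(A) \subseteq I_{bin}(A)$ is immediate from part (1) of the preceding remark: any $2\times 2$ minor is, by definition, also a binary quasi-minor (a $2\times 2$ submatrix is already a binary quasi-matrix).

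For the reverse inclusion, the approach is to exploit the combinatorial structure of a binary quasi-matrix. Let $B$ be an $n\times n$ binary subquasi-matrix of $A$, with rows indexed by $i_1,\dots,i_n$ and columns by $j_1,\dots,j_n$, and let $\sigma,\tau$ be permutations with $\sigma(l)\neq\tau(l)$ for all $l$, so that $\delta := \prod_l a_{i_l,\sigma(l)} - \prod_l a_{i_l,\tau(l)}$ is a binary quasi-minor. First I would observe that the two "support sets" of $\sigma$ and $\tau$ together form a bipartite multigraph on the row-set and the column-set in which every vertex has degree exactly $2$ (each row and each column of $B$ must contain both the $\sigma$-entry and the $\tau$-entry, giving $2$ edges at each vertex). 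Therefore this graph decomposes as an edge-disjoint union of cycles $C_1,\dots,C_r$, and on each cycle $\sigma$ and $\tau$ restrict to the two opposite perfect matchings.

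The key lemma is then: \emph{for a single cycle of length $2k$ passing through rows $i_1,\dots,i_k$ and columns $j_1,\dots,j_k$ with entries at $(i_l,j_l)$ and $(i_l,j_{l+1})$ (indices mod $k$), the difference $M_1 - M_2$ where $M_1=\prod_l a_{i_l,j_l}$ and $M_2=\prod_l a_{i_l,j_{l+1}}$ lies in $I_2(A)$.} To prove this I would telescope by defining intermediate monomials $N_0 = M_1, N_1, \dots, N_{k-1} = M_2$, where $N_l$ sends rows $i_1,\dots,i_l$ to columns $j_2,\dots,j_{l+1}$, row $i_{l+1}$ to column $j_1$, and the remaining rows to their diagonal columns. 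Then $N_l - N_{l+1}$ factors as (monomial)$\cdot(a_{i_{l+1},j_1}a_{i_{l+2},j_{l+2}} - a_{i_{l+1},j_{l+2}}a_{i_{l+2},j_1})$, and the parenthetical factor is a genuine $2\times 2$ minor of $A$ since $A$ is an ordinary matrix (no empty entries). Summing $\sum_{l=0}^{k-1}(N_l - N_{l+1}) = M_1 - M_2$ gives the claim.

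To finish, I would write $\delta = \prod_{c=1}^r M_1^{(c)} - \prod_{c=1}^r M_2^{(c)}$ for the cycle-wise products, and expand via the standard identity
\[
\prod_c M_1^{(c)} - \prod_c M_2^{(c)} = \sum_{c=1}^r \Bigl(\prod_{c'<c} M_2^{(c')}\Bigr)\bigl(M_1^{(c)}-M_2^{(c)}\bigr)\Bigl(\prod_{c'>c} M_1^{(c')}\Bigr),
\]
so that each summand lies in $I_2(A)$ by the cycle lemma, giving $\delta \in I_2(A)$. The main obstacle is purely notational: setting up the telescoping in step two carefully enough that the "swap" at each stage is manifestly a $2\times 2$ minor of $A$ rather than of the subquasi-matrix $B$; this works precisely because $A$ has all entries present, so the $2\times 2$ minors used in the telescoping are guaranteed to exist in $A$ even when the corresponding positions are empty in $B$.
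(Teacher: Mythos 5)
Your proof is correct, but it follows a genuinely different route from the paper's. The paper proves the nontrivial inclusion $I_{bin}(A)\subseteq I_2(A)$ by induction on the size $n$ of the binary quasi-minor $\delta=V_1\cdots V_n-W_1\cdots W_n$: it picks the auxiliary entry $U$ of $A$ sitting in the column of $V_1$ and the row of $V_2$, writes $\delta=(V_1V_2-UW_1)V_3\cdots V_n+W_1(UV_3\cdots V_n-W_2\cdots W_n)$, and then splits into cases according to whether $U$ equals some $W_i$ (in which case the second factor drops to a quasi-minor of size $n-2$) or not (size $n-1$). You instead make the underlying combinatorics explicit: the union of the two matchings determined by $\sigma$ and $\tau$ is a $2$-regular bipartite graph, hence a disjoint union of cycles; a single cycle is handled by telescoping through intermediate monomials whose consecutive differences are honest $2\times 2$ minors of $A$, and the multi-cycle case by the standard product-telescoping identity. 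Both arguments hinge on the same point, which you correctly isolate: the auxiliary entries used in each swap exist because $A$ is a full matrix, even though the corresponding positions are empty in the binary subquasi-matrix. Your version avoids the paper's case analysis on $U$ and gives extra structural information --- it shows that quasi-determinants coming from several cycles are already generated by single-cycle ones, which dovetails with the cycle description of binary quasi-minors the paper develops later in Discussion~\ref{cycle}. The only blemish is an off-by-one: since $N_{k-1}=M_2$, the telescoping sum in your cycle lemma should run $\sum_{l=0}^{k-2}(N_l-N_{l+1})$, not to $l=k-1$; this is purely notational and does not affect the argument.
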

\begin{proof}
	It is enough to show that every binary quasi-minor in $A$ is an $R$-combination of $2\times 2$ minors. Let $\delta=V_1V_2\dots V_n-W_1W_2\dots W_n$ be an arbitrary binary quasi-minor. We induct on $n\ge2$. Since the result is clear for $n=2$, we may assume $n\ge3$ and that the result holds for binary quasi-minors of size $<n$.
	
	We may assume $V_1$ is in the same row with $W_1$ and $V_2$ is in the same column with $W_1$. Let $U$ be the entry of $A$ in the same column as $V_1$ and same row as $V_2$. Then 
	\begin{gather*}
	\begin{aligned}
	&&\delta&=\delta-UW_1V_3\dots V_n+UW_1V_3\dots Vn\\
	&&      &=(V_1V_2-UW_1)V_3\dots V_n+W_1(UV_3\dots V_n-W_2\dots W_n).
	\end{aligned}
	\end{gather*}
	If $U$ is not one of the $W$'s, then the subquasi-matrix obtained by deleting the first row and column involving $W_1$ and $V_2$, and containing $U$ is binary quasi-matrix, with $UV_3\dots V_n-W_2\dots W_n$ as an $(n-1)$-sized binary quasi-minor.
	
	On the other hand, if $U$ is a $W_i$, say $W_2$ (which can only happen if $n\ge4$), then $UV_3\dots V_n-W_2\dots W_n=W_2(V_3\dots V_n-W_3\dots W_n)$ and $V_3\dots V_n-W_3\dots W_n$ is a
	binary quasi-minor of A of size $(n-2)$. In either case, we are done by induction.
\end{proof}

\section{Equations of the multi-Rees algebra}

Let $I_i=J_i^{a_i}$, where the $a_i$'s are positive integers and the ideals $J_i$ are generated by arbitrary subsets of $x_1,\dots,x_n$. In the rest of this paper, by generators of $J_i$ we mean these subsets of $x_1,\dots,x_n$. Also, by minimal generating set of $I_i$ we mean generators that are created from mentioned generating set of $J_i$. We denote $\boldsymbol{a}=a_1,\dots,a_r$.

Let $\calI=\{I_1,\ldots,I_r\}$. If $\balpha=(\alpha_1,\ldots,\alpha_n)\in\Z_{\ge 0}^n$, we write $\x^{\balpha}$ for $x_1^{\alpha_1}\cdots x_n^{\alpha_n}$. We write $\bt$ for the set of variables $\{t_1,\ldots,t_r\}$. We also write $k[\x,\bt]$ for the polynomial ring $k[\x][t_1,\ldots,t_r]=k[x_1,\ldots,x_n,t_1,\ldots,t_r]$. 

We consider the multi-Rees algebra of $\calI$:
\[
k[\x][\calI\bt]=k[\x][I_1t_1,\ldots,I_rt_r]=\bigoplus\limits_{b_1,\ldots,b_r\ge 0} I_1^{b_1}\cdots I_k^{b_r}t_1^{b_1}\cdots t_r^{b_r}.
\]

Let $G_1,\ldots,G_r$ be minimal sets of generators for $I_1,\ldots,I_r$. Then clearly
\[
k[\x][I_1t_1,\ldots,I_rt_k]=k[x_1,\ldots,x_n,\{\x^{\balpha} t_j: \x^{\balpha}\in G_j\}]
\]
We create a variable $T_{\x^\alpha t_j}$ for each monomial $\x^{\balpha}t_j$ and write $\bT$ for the set of all such variables. We then define the map $\phi$ as follows:
\[
\phi:S:=k[\x,\bT]\to k[\x,\bt],
\]
where $\phi(x_i)=x_i$ for all $x_i\in\x$ and $\phi(T_{\x^{\balpha}t_j})=x^{\balpha}t_j$ for all $\x^{\balpha}t_j$ with $\x^{\balpha}\in G_j$.  Clearly this is a toric map as discussed in Section~\ref{toricsection}. We are concerned primarily with the defining equations of $k[\x][I_1t_1,\dots,I_rt_r]$, which is the toric ideal $\ker(\phi)$.

Let $\mathfrak{m}$ for the ideal $\langle x_1,\ldots,x_n\rangle\subset k[\x]$. Then the multi-fiber ring of $k[\x][\calI\bt]$ is $k[\x][\calI\bt]/\mathfrak{m}k[x][\calI\bt]$. Since in our case the monomial ideals $\calI=\{I_1,\ldots,I_r\}$ are each generated in a single degree, then we have an isomorphism
\[
k[\x][\calI\bt]/\mathfrak{m}k[\x][\calI\bt]\cong k[\x^{\balpha} t_j: \x^{\balpha}\in G_j, 1\le j\le r].
\]
We denote the ring $k[\x^{\balpha} t_j: \x^{\balpha}\in G_j, 1\le j\le r]$ by $k[\calI\bt]$. We define the map $\psi$ as follows:
\[
\psi: k[\bT]\to k[\x,\bt],
\]
where $\psi(T_{\x^{\balpha}t_j})=x^{\balpha}t_j$.

If $\calI=\{I\}$ consists of a monomial ideal generated in a single degree, then
\[
k[\x][It]/\mathfrak{m}k[\x][It]\cong k[\x][It]\cong k[I],
\]
is the toric ring of $I$.

The following definition can be found in~\cite{BC17}.

\begin{definition}
	The ideal $I$ that is generated by the elements $f_1,\dots,f_m$ of single degree is said to be of fiber type if the defining ideal of the Rees algebra $k[\x][It]$ is generated by polynomials that either (1) are linear in the indeterminates representing the generators $f_it$ of the Rees algebra (and therefore are relations of the symmetric algebra), or (2) belong to the defining ideal of $k[I]$. One can immediately generalize this notion and speak of family $I_1,\dots,I_r$ of multi-fiber type.
\end{definition}

We will see that the family of ideals $I_1,\dots,I_r$ in our case is of multi-fiber type. 

\begin{remark}
	In~\cite[Definition 4.1]{HHV05}, authors define $l$-exchange property for a monomial ideal, whose generators have one degree. They also show in \cite[Theorem 5.1]{HHV05} that if an ideal $I$ satisfies $l$-exchange property, then Gr\"{o}bner basis of defining equations of the Rees algebra $k[\x][It]$ is formed by Gr\"{o}bner basis of defining equations of $k[I]$ plus some of relations of symmetric algebra. We see easily that in our case each of ideals $I_1,\dots,I_r$ satisfies $l$-exchange property with any monomial order. We will see in the Example~\ref{counter}, that even under the conditions that mentioned in the~\cite[Theorem 5.1]{HHV05}, this theorem does not hold in our case. That means the Gr\"{o}bner basis will have some polynomials involving $x$'s which are not symmetric polynomials. Even though, as we mentioned, these family of ideals is of multi-fiber type. 
\end{remark}

\begin{convention}[Monomial order for $k\left\lbrack\x,\bT\right\rbrack$]\label{conv:MonomialOrder}
	We fix the following monomial order on $k[\x,\bT]$. We order the variables of $k[\x,\bT]$ by $T_{mt_i}\succ T_{nt_j}$ if $i>j$ or $i=j$ and $m\succ_{\grevlex} n$. Furthermore we make $T_{mt_i}\succ x_j$ for any variable $T_{mt_i}\in\bT$ and $x_j\in\x$.  On top of this ordering of the variables of $k[\x,\bT]$ we put the lexicographic order.
\end{convention}

\begin{definition}
	For a fixed $l$, we define the matrix $B_{a_l}$, whose entries in first row are $T_{x_1^{p_1}\dots x_n^{p_n}t_l}$, where $x_1^{p_1}\dots x_n^{p_n}$ are generators of $\mathfrak{m}^{a_l}$, $p_1\ge 1$, and smaller elements are put on the left. For each $T_{x_1^{p_1}\dots x_n^{p_n}t_l}$ on the first row entry under this element on the $v$-th row is 
	$$
	T_{x_1^{p_1-1}\dots x_v^{p_v+1}\dots x_n^{p_n}t_l}=T_{\frac{x_v}{x_1}x_1^{p_1}\dots x_v^{p_v}\dots x_n^{p_n}t_l},\ 1\le v\le n.
	$$
\end{definition} 

We see that if $T_{mt_l}$ is in the column $v$ and row $i$ of $B_{a_l}$, then the entry in the same column and row $j$ is $T_{\frac{x_j}{x_i}mt_l}$. Also, we see that if all distinct factors of the monomial $m$ of degree $a_l$ are $x_{i_1},\dots,x_{i_v}$, then $T_{mt_l}$ appears in the rows $i_1,\dots,i_v$ of $B_{a_l}$.

\begin{definition}
	Let $J_l=\langle x_{i_1},\dots,x_{i_p}\rangle$. We define the quasi-matrix $D_{a_l}$ to be the subquasi-matrix of $B_{a_l}$ by choosing $T_{m t_l}$ from $i_1$-row such that $m$ are monomials in $x_{i_1},\dots,x_{i_p}$. Then under these elements we choose entries on rows $i_2,\dots,i_p$. This is nothing but choosing all $T_{m t_l}$ such that $m$ are monomials in $x_{i_1},\dots,x_{i_p}$.
	
	We define the subquasi-matrices $D_{\boldsymbol{a}}\coloneqq(D_{a_1}|D_{a_2}|\dots|D_{a_r})$ and $C_{\boldsymbol{a}}\coloneqq(\mathbf{x}|D_{\boldsymbol{a}})$.
\end{definition}

\begin{lemma}
	$I_{bin}(C_{\boldsymbol{a}})\subseteq\ker(\phi)$.
\end{lemma}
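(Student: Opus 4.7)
The plan is to extract a ``column invariant'' from the construction of $C_{\boldsymbol{a}}=(\x|D_{\boldsymbol{a}})$ and use it to verify that $\phi$ vanishes on every binary quasi-minor in one stroke, without any inductive unfolding. The first step is the observation that for any entry $E$ of $C_{\boldsymbol{a}}$ lying in row $l$ of some column $v$, the quotient $\phi(E)/x_l$ depends only on $v$ (and not on $l$). For the $\x$-column this is immediate: $E=x_l$ gives $\phi(E)/x_l=1$. For a column $v$ of $D_{a_j}$, an entry in row $l$ has the form $T_{mt_j}$ with $x_l\mid m$; the defining property of $B_{a_j}$ forces the entry in row $l'$ of the same column to be $T_{m't_j}$ with $m'=(x_{l'}/x_l)\,m$, so $(m/x_l)\,t_j=(m'/x_{l'})\,t_j$, and both entries yield the same quotient. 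Call this common value $K_v\in k[\x,\bt]$.

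Second, pick any binary subquasi-matrix of $C_{\boldsymbol{a}}$ with row set $R$ and column set $C$, and any binary quasi-minor $\delta=\prod_{l\in R}V_l-\prod_{l\in R}W_l$ whose associated permutations $\sigma,\tau\colon R\to C$ satisfy $\sigma(l)\ne\tau(l)$ for every $l\in R$. The binary structure forces exactly two entries per selected row and per selected column, so $\sigma$ and $\tau$ are bijections and $V_l$ (respectively $W_l$) sits in column $\sigma(l)$ (respectively $\tau(l)$). By the column invariant,
\[
\phi(V_l)=K_{\sigma(l)}\,x_l,\qquad \phi(W_l)=K_{\tau(l)}\,x_l \quad\text{for each } l\in R,
\]
so
\[
\prod_{l\in R}\phi(V_l)=\Bigl(\prod_{l\in R}x_l\Bigr)\Bigl(\prod_{v\in C}K_v\Bigr)=\prod_{l\in R}\phi(W_l),
\]
where the last equality uses $\tau$ in place of $\sigma$ together with the bijectivity $R\to C$. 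Hence $\phi(\delta)=0$, giving $I_{bin}(C_{\boldsymbol{a}})\subseteq\ker(\phi)$.

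I do not anticipate any serious obstacle: the lemma is essentially a bookkeeping consequence of how $D_{a_j}$ is built -- entries in a single column share one common ``base monomial'' after dividing out the row variable. The only care needed is to interpret the definition of a binary quasi-minor correctly, so that $\sigma$ and $\tau$ together visit every entry of the subquasi-matrix exactly once; this is precisely what causes the $K_v$'s and the $x_l$'s to pair up symmetrically in the product.
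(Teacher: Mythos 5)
Your proof is correct. The column invariant you isolate --- that for an entry $E$ in row $l$ and column $v$ of $C_{\boldsymbol{a}}$ the quotient $\phi(E)/x_l$ depends only on $v$ --- is precisely the structural fact the paper records right after defining $B_{a_l}$ (the entries of one column in rows $i$ and $j$ differ by the factor $x_j/x_i$), and it clearly survives passage to the subquasi-matrices $D_{a_l}$ and to $C_{\boldsymbol{a}}$ since deleting rows, columns, or entries does not move the remaining ones. The paper's own proof uses the same fact but organizes it differently: it normalizes the binary quasi-minor (without loss of generality) so that its factors form a single closed chain, writes the second term as the first with the ratios $x_{i_s}/x_{i_{s+1}}$ inserted, and lets the product telescope. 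Your version replaces the telescoping by global bookkeeping: since $\sigma$ and $\tau$ are both bijections from the row set onto the column set, each of the two terms maps to $\bigl(\prod_{l}x_l\bigr)\bigl(\prod_{v}K_v\bigr)$. This buys a little extra robustness: it applies verbatim to quasi-determinants whose support splits into several disjoint cycles but whose two monomials mix those cycles (such as $adgf-bceh$ in the paper's own $4\times 4$ example in the remark on quasi-determinants), a configuration the paper's single-chain normalization does not literally describe. The trade-off is that the paper's chain presentation is reused later (in the proof of the main theorem and in the discussion relating quasi-minors to cycles of a bipartite graph), so its normalization is doing double duty there, whereas your argument is self-contained to this lemma.
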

\begin{proof}
	Let $\alpha$ be a binary quasi-minor of $C_{\boldsymbol{a}}$. It is enough to show that $\phi(\alpha)=0$. We prove the case that binary quasi-minor involves $x$, where the proof for the other case is similar. Suppose $i_1,\dots,i_v$ are the rows that factors of $\alpha$ appear. Without loss of generality we may assume that $x$'s appear in the rows $i_1$ and $i_v$. Let one term of $\alpha$ be $$
	x_{i_1}T_{m_1j_{l_1}}T_{m_2j_{l_2}}\dots T_{m_{v-1}j_{l_{v-1}}},
	$$ 
	where $T_{m_s{l_s}}$ is in the row $i_{s+1}$, and $l_s$ are not necessarily distinct. Then without loss of generality we may assume that other term of $\alpha$ is
	$$
	x_{i_v}T_{\frac{x_{i_1}}{x_{i_2}}m_1j_{l_1}}T_{\frac{x_{i_2}}{x_{i_3}}m_2j_{l_2}}\dots T_{\frac{x_{i_{v-1}}}{x_{i_v}}m_{v-1}j_{l_{v-1}}}.
	$$
	Then clearly $\phi(\alpha)=0$. This completes the proof.   
\end{proof}

\begin{theorem}\label{main-result}
	Suppose that ideals $J_i$ are generated by subsets of  $x_1,\dots,x_n$. Let $I_i=J_i^{a_i}$. Suppose $\calI=\{I_1,\dots,I_r\}$. Then the multi-Rees algebra $k[\x][\calI\bt]$ is normal and Cohen-Macaulay. Explicitly, a Gr\"obner basis of binary quasi-minors of $C_{\boldsymbol{a}}$ whose leading terms are squarefree is given for $\ker(\phi)$, with lexicographic order as follows: (1) Some of them are $2\times 2$-minors of $(\mathbf{x}|D_{a_l})$. (2) Some of them are binary quasi-minors such that in each term we have at most one entry from $\mathbf{x}$ and each $D_{a_l}$. 
\end{theorem}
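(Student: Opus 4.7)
The plan is to invoke Proposition \ref{prop:Graph}: to prove that the prescribed family $\calB$ of binary quasi-minors is a Gröbner basis for $\ker(\phi)$ under the lex order of Convention \ref{conv:MonomialOrder}, it suffices to show that every nonempty fiber graph $\vec{\Gamma}_{\mu,\calB}$ has a unique sink. The normality and Cohen-Macaulayness would then follow formally: since every leading term $\mathrm{in}_\prec(b)$ for $b\in\calB$ is squarefree, the initial ideal $\mathrm{in}_\prec(\ker\phi)$ is generated by squarefree monomials, so by Sturmfels' theorem the toric ring $k[\x][\calI\bt]$ is normal, and normal affine semigroup rings are Cohen-Macaulay by Hochster's theorem.

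I would first dispatch the ``internal'' reductions. Any binomial of type (1) is a $2\times 2$-minor of $(\mathbf{x}\mid D_{a_l})$ for a single $l$, and via Proposition \ref{minor-binary-quasi-minor} these generate the same ideal as the binomials $T_{m t_l}T_{n t_l}-T_{\frac{x_i}{x_j}mt_l}T_{\frac{x_j}{x_i}n t_l}$ appearing in Theorem \ref{toricidealcase}, applied to each ideal $I_l$ separately. Hence, restricted to fibers $\mu = \x^{\balpha}t_l^{c}$ supported in a single $t_l$-block, uniqueness of sinks is already known. I would then characterize a sink of a general fiber $\mu=\x^{\balpha}\bt^{\bgamma}$ as a monomial $\x^{\bbeta}\bT^{\bgamma'}$ satisfying (i) each $t_l$-block of $\bT^{\bgamma'}$ is grevlex-minimal among all rearrangements of its total $\x$-multidegree, and (ii) no type-(2) binary quasi-minor can be applied to mix $x$-factors between different $t_l$-blocks or between $\x^{\bbeta}$ and some $\bT$-factor.

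The main obstacle is proving uniqueness when several $t_l$-blocks interact; this is the genuinely new case beyond the chordal situation of \cite{DJ20}, where $2\times 2$ and $3\times 3$ binary quasi-minors sufficed. My strategy is to build the auxiliary directed bipartite graph $H$ advertised in the introduction: vertices record occurrences of each $x_i$ inside either the $\x^{\bbeta}$-part or a $T_{m t_l}$-factor of a given fiber element, and directed edges encode a permissible swap $x_i \leftrightarrow x_j$ between two such positions whenever carrying out the swap would strictly decrease the lex order. Given two purported sinks $\x^{\bbeta}\bT^{\bgamma'}$ and $\x^{\bbeta''}\bT^{\bgamma''}$ lying over the same $\mu$, their difference pattern gives a nontrivial closed walk in $H$. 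The key lemma I would need is that every simple directed cycle of length $v$ in $H$, with $2\le v\le r+1$, lifts to a binary quasi-minor of $C_{\boldsymbol{a}}$ of the form in part (2) of the theorem, whose application strictly decreases the lex-larger of the two sinks. This contradicts the assumption that both are sinks, forcing $(\bbeta,\bgamma')=(\bbeta'',\bgamma'')$.

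The degree bound $r+1$ on members of $\calB$ in the statement is exactly the maximal length of a simple cycle of $H$ that can involve entries from distinct $D_{a_l}$'s plus at most one $x$-vertex, which matches the number of column blocks of $C_{\boldsymbol{a}}$. The hardest and most delicate step is therefore verifying the cycle-to-quasi-minor lifting lemma in the non-chordal regime, because one must simultaneously choose the correct rotation of the cycle to ensure the resulting binomial has a well-defined squarefree leading monomial under Convention \ref{conv:MonomialOrder}; I anticipate that this amounts to a careful case analysis on how the variables $x_i$ entering the cycle are distributed among the generating sets of the $J_l$.
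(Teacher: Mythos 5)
Your proposal follows essentially the same route as the paper's proof: both reduce to uniqueness of sinks in the fiber graphs via Proposition~\ref{prop:Graph}, dispatch the single-$t_l$-block case with Theorem~\ref{toricidealcase}, encode the discrepancy between two elements of the same fiber in a directed bipartite graph on $x$- and $t$-vertices (the paper adjoins an auxiliary vertex $t_0$ for the $\x$-part), extract a directed cycle from the fact that every vertex has in- and out-degree at least one, and lift that cycle to a binary quasi-minor of type (2) that lex-decreases one of the two monomials --- the orientation/rotation issue you flag as delicate is resolved in the paper simply by taking the maximal index $t_{j_l}$ occurring on the cycle and comparing the two $x$-vertices adjacent to it. Normality and Cohen--Macaulayness are then obtained exactly as you describe, from the squarefree initial ideal via Sturmfels and Hochster.
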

\begin{proof}
	We use Proposition~\ref{prop:Graph}, with $\calB$ the indicated set. Under the given monomial order, we seek to show that the directed graphs $\vec{\Gamma}_{\mu,\calB}$ have a unique sink. Let $M',N'\in S_\mu$. 
	
	Let $M=\frac{M'}{\gcd(M',N')}$ and $N=\frac{N'}{\gcd(M',N')}$. Hence $\phi(M)=\phi(N)$, $M,N\in S_\nu$. Therefore to prove the theorem it is enough to show that $M$ or $N$ is not a sink, and to do this we build the directed graph $\vec{\Theta}_{M}$ (resp. $\vec{\Theta}_{N}$) associated to $M$ (resp. $N$). $\vec{\Theta}_{M}$ and $\vec{\Theta}_{N}$ are bipartite graphs with the same vertices. One partition for both of them is a subset of $x$'s and other partition is a subset of $t_j$'s ($0\le j\le r$). Note that here we create an extra variable $t_0$.
	
	For each $1\le j\le r$ (not necessarily every such $j$ will be considered), we consider $M_j$ (resp. $N_j$) to be the product of all factors of $M$ (resp. $N$) involving $t_j$. If $\phi(M_j)=\phi(N_j)$, then by Theorem~\ref{toricidealcase} and Proposition~\ref{prop:Graph}, we see that either $M_j$ or $N_j$ is not a sink. So that $M$ or $N$ is not a sink. Then we assume that for every such a $j$, $\phi(M_j)\neq \phi(N_j)$. Since number of factors involving $t_j$ in both $M_j$ and $N_j$ is equal, there is $x_\alpha$ (resp. $x_\beta$) ($x_\alpha\neq x_\beta$) in $\phi(M_j)$ (resp. $\phi(N_j)$) such that its power is greater in $\phi(M_j)$ (resp. $\phi(N_j)$). Hence there is $T_{mt_j}$ in $M_j$ where $x_\alpha$ divides $m$. Also, there is $T_{m't_j}$ in $N_j$, where $x_\beta$ divides $m'$. 
	
	Back to building the graphs, $x_\alpha$, $x_\beta$, and $t_j$ are vertices of both graphs. In the graph $\vec{\Theta}_{M}$ (resp. $\vec{\Theta}_{N}$) one directed edge goes from $x_\alpha$ to $t_j$ (resp. from $t_j$ to $x_\alpha$) and other  directed edge goes from $t_j$ to $x_\beta$ (resp. from $x_\beta$ to $t_j$).
	
	On the other hand, since $\phi(M)=\phi(N)$, there is a factor of $x_\beta$ (resp. $x_\alpha$) in $\frac{\phi(M)}{\phi(M_j)}$ (resp. $\frac{\phi(N)}{\phi(N_j)}$). Then either $x_\beta$ divides $M$ or there is $T_{nt_{j'}}$ in $M$, where $x_\beta$ divides $n$. In the former case, in the graph $\vec{\Theta}_{M}$ (resp. $\vec{\Theta}_{N}$) one directed edge goes from $x_\beta$ to $t_0$ (resp. one directed edge goes from  $t_0$ to $x_\beta$) . In the latter case, in the graph $\vec{\Theta}_{M}$ (resp. $\vec{\Theta}_{N}$) one directed edge goes from $x_\beta$ to $t_{j'}$ (resp. one directed edge goes from $t_{j'}$ to $x_\beta$). Also, either $x_\alpha$ divides $N$ or there is $T_{n't_{j''}}$ in $N$, where $x_\alpha$ divides $n'$ ($j'$ and $j''$ are not necessarily different). If $x_\alpha$ divides $N$, then in the graph $\vec{\Theta}_{M}$ (resp. $\vec{\Theta}_{N}$) one directed edge goes from $t_0$ to $x_\alpha$ (resp. one directed edge goes from $x_\alpha$ to $t_0$). If  there is $T_{n't_{j''}}$ in $N$, where $x_\alpha$ divides $n'$, then in the graph $\vec{\Theta}_{M}$ (resp. $\vec{\Theta}_{N}$) one directed edge goes from $t_{j''}$ to $x_\alpha$ (resp. one directed edge goes from $x_\alpha$ to $t_{j''}$).
	
	Finally, if $m$ and $n$ are $x$-monomials of $M$ and $N$ (in the case that they exist, and if one of them exists, then other one also exists), then $\gcd(m,n)=1$. But total degree of $m$ and $n$ is the same. Therefore, there are $x_\gamma$ and $x_\delta$ ($x_\gamma\neq x_\delta$) such that $x_\gamma\mid m$, $x_\gamma\nmid n$, $x_\delta\mid n$, $x_\delta\nmid m$. Hence there is $T_{pt_{j'}}$ in $M$ (resp. $T_{qt_{j''}}$ in $N$) ($j'$ and $j''$ are not necessarily different) such that $x_\delta$ divides $p$ (resp. $x_\gamma$ divides $q$). In the graph $\vec{\Theta}_{M}$ (resp. $\vec{\Theta}_{N}$) one directed edge goes from $x_\gamma$ to $t_0$ (resp. from $t_0$ to $x_\gamma$) and another directed edge goes from $t_0$ to $x_\delta$ (resp. from $x_\delta$ to $t_0$). Also, in the graph $\vec{\Theta}_{M}$ (resp. $\vec{\Theta}_{N}$) one directed edge goes from $x_\delta$ to $t_{j'}$ (resp. from $t_{j'}$ to $x_\delta$) and another directed edge goes from $t_{j''}$ to $x_\gamma$ (resp. from $x_\gamma$ to $t_{j''}$). 
	
	The point of building the graph $\vec{\Theta}_{M}$ (similar for $\vec{\Theta}_{N}$) is that if in this graph $x_i$ goes to $t_j$ ($j\neq 0$), then there is a $T_{mt_j}$ dividing $M$ such that $x_i$ divides $m$. If $x_i$ goes to $t_0$, then $x_i$ divides $M$. Now, if $t_j$ ($j\neq 0$) goes to $x_i$, then $x_i$ is a generator of $J_j$. Finally if $t_0$ goes to $x_i$, then it means $x_i$ is between generators of at least one of $J_j$.
	
	As we see direction of edges in the graph $\vec{\Theta}_{M}$ is opposite to direction of edges in the graph $\vec{\Theta}_{N}$. In the graph $\vec{\Theta}_{M}$ in-degree and out-degree of all vertices are greater or equal to 1 (the same in $\vec{\Theta}_{N}$). Thus there is a cycle in $\vec{\Theta}_{M}$ which we denote by $C_M$. If we change the direction of edges in $C_M$, then we obtain a cycle $C_N$ in $\vec{\Theta}_{N}$. 
	
	Let $j_l$ be the maximum index between $j$, where $t_j$ are vertices of one partition in $C_M$. Let $x_{i_l}, x_{i_{l-1}}$ be in $C_M$ such that $x_{i_l}$ goes to $t_{j_l}$ and $x_{i_{l-1}}$ leaves $t_{j_l}$. Also, let $t_{j_1},\dots,t_{j_l}$ be vertices of one partition in $C_M$ and $x_{i_1},\dots,x_{i_l}$ be vertices in other partition in $C_M$. The cycle $C_M$ is shown in Figure~\ref{directedgraph}.
	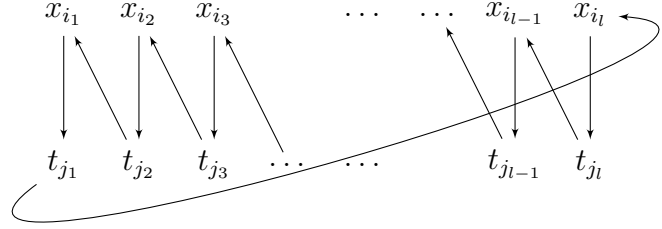
\begin{figure}
		\centering
		\begin{tikzpicture}
		\tikzset{vertex/.style = }
		\tikzset{edge/.style = {->,> = latex'}}
		% vertices
		\node[vertex] (xi1)    at  (0,2) {$x_{i_1}$};
		\node[vertex] (xi2)    at  (1,2) {$x_{i_2}$};
		\node[vertex] (xi3)    at  (2,2) {$x_{i_3}$};
		\node[vertex] ()      at  (4,2) {$\dots$};
		\node[vertex] (a)      at  (5,2) {$\dots$};
		\node[vertex] (xil-1)  at  (6,2) {$x_{i_{l-1}}$};
		\node[vertex] (xil)    at  (7,2) {$x_{i_l}$};
		
		\node[vertex] (tj1)    at  (0,0) {$t_{j_1}$};
		\node[vertex] (tj2)    at  (1,0) {$t_{j_2}$};
		\node[vertex] (tj3)    at  (2,0) {$t_{j_3}$};
		\node[vertex] (b)      at  (3,0) {$\dots$};
		\node[vertex] ()      at  (4,0) {$\dots$};
		\node[vertex] (tjl-1)  at  (6,0) {$t_{j_{l-1}}$};
		\node[vertex] (tjl)    at  (7,0) {$t_{j_l}$};
		%edges
		\draw[edge] (b)   to (xi3);
		\draw[edge] (tjl-1)   to (a);
		\draw[edge] (xil)   to (tjl);
		\draw[edge] (tjl)   to (xil-1);
		\draw[edge] (xil-1) to (tjl-1);
		\draw[edge] (xi3)   to (tj3);
		\draw[edge] (tj3)   to (xi2);
		\draw[edge] (xi2)   to (tj2);
		\draw[edge] (tj2)   to (xi1);
		\draw[edge] (xi1)   to (tj1);
		\draw[edge][bend right=160] (tj1)   to (xil);
		\end{tikzpicture}
		\caption{The directed cycle $C_M$ in the Theorem~\ref{main-result}}
		\label{directedgraph}
	\end{figure}
	If $x_{i_l}\succ_\lex x_{i_{l-1}}$, then by considering properties of the graph $\vec{\Theta}_{M}$, we have
	\begin{gather*}
	M''=\frac{T_{\frac{x_{i_l}}{x_{i_1}}m_1t_{j_1}}}{T_{m_1}t_{j_1}}\dots\frac{T_{\frac{x_{i_{l-2}}}{x_{i_{l-1}}}m_{l-1}t_{j_{l-1}}}}{T_{m_{l-1}}t_{j_{l-1}}}\frac{T_{\frac{x_{i_{l-1}}}{x_{i_l}}m_lt_{j_l}}}{T_{m_l}t_{j_l}}M,
	\end{gather*}
	just we should remark that if some $j_u=j_0$ ($1\le u\le l$), then in above formula, instead of $T_{m_ut_{j_u}}$ we will have $x_{i_u}$ and instead of $T_{\frac{x_{i_{u-1}}}{x_{i_u}}m_ut_{j_u}}$ we will have $x_{i_{u-1}}$. Then $M\succ_\lex M''$ and $M\to M''$, so that $M$ is not a sink.
	
	If $x_{i_{l-1}}\succ_\lex x_{i_l}$, then by a similar argument we can show that $N$ is not a sink. Note that 
	$$
	T_{m_l}t_{j_l}T_{m_{l-1}}t_{j_{l-1}}\dots T_{m_1}t_{j_1}-T_{\frac{x_{i_{l-1}}}{x_{i_l}}m_lt_{j_l}}T_{\frac{x_{i_{l-2}}}{x_{i_{l-1}}}m_{l-1}t_{j_{l-1}}}\dots T_{\frac{x_{i_l}}{x_{i_1}}m_1t_{j_1}},
	$$
	is a binary quasi-minor, as $T_{m_{l-s}}t_{j_{l-s}}$ is in the same row with $T_{\frac{x_{i_{l-s}}}{x_{i_{l-s+1}}}m_{l-s+1}t_{j_{l-s+1}}}$ ($1\le s\le l-1$), and $T_{m_l}t_{j_l}$ is in the same row with $T_{\frac{x_{i_l}}{x_{i_1}}m_1t_{j_1}}$. Also, $T$ variables involving $t_{j_{l-s}}$ ($0\le s\le l-1$) are in the same columns.
	
	Finally, we conclude normality by~\cite[Proposition 13.5, Proposition 13.15]{sturmfels1996grobner}. Cohen-Macaulayness is concluded from~\cite[Proposition 1, Theorem 1]{hochster1972rings}.
\end{proof}

We can prove the following result similar to Theorem~\ref{main-result}.

\begin{theorem}
	Suppose that ideals $J_i$ are generated by subsets of  $x_1,\dots,x_n$. Let $I_i=J_i^{a_i}$. Suppose $\calI=\{I_1,\dots,I_r\}$. Then the multi-fiber ring $k[\calI\bt]$ is normal and Cohen-Macaulay. Explicitly, a Gr\"obner basis of binary quasi-minors of $D_{\boldsymbol{a}}$ whose leading terms are squarefree is given for $\ker(\psi)$, as follows: (1) Some of them are $2\times 2$-minors of $D_{a_l}$. (2) Some of them are binary quasi-minors such that in each term we have at most one entry from each $D_{a_l}$. We take the monomial order on $k[\bT]$ to be the monomial order induced on $k[\bT]$ as a subring of $k[\x,\bT]$, where the latter is given the monomial order of Convention~\ref{conv:MonomialOrder}. 
\end{theorem}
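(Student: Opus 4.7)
The plan is to mimic the proof of Theorem~\ref{main-result} in the simpler setting where the source ring $k[\bT]$ contains no $x$-variables. I would apply Proposition~\ref{prop:Graph} to the toric map $\psi$ with $\calB$ the indicated set of binary quasi-minors of $D_{\boldsymbol{a}}$, so it suffices to show that every nonempty directed fiber graph $\vec{\Gamma}_{\mu,\calB}$ has a unique sink. Given monomials $M',N'\in S_\mu$, set $M=M'/\gcd(M',N')$ and $N=N'/\gcd(M',N')$; it is enough to show that at least one of $M,N$ is not a sink. Writing $M_j$ (resp.\ $N_j$) for the product of factors of $M$ (resp.\ $N$) involving $t_j$, if there exists some $j$ with $\psi(M_j)=\psi(N_j)$, then Theorem~\ref{toricidealcase} applied to the ideal $I_j=J_j^{a_j}$ already produces an outgoing edge out of $M_j$ or $N_j$, which lifts to the required edge on $M$ or $N$.

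Otherwise $\psi(M_j)\neq\psi(N_j)$ for every $j$, and I build directed bipartite graphs $\vec{\Theta}_M$ and $\vec{\Theta}_N$ on a common vertex set, with one part a subset of the $x$'s and the other a subset of the $t_j$'s. Unlike in the proof of Theorem~\ref{main-result}, no dummy vertex $t_0$ is needed, because the source $k[\bT]$ contains no $x$-variables and hence $M$ and $N$ contain no $x$-monomial factors to account for. For each $j$, comparing exponents of $\psi(M_j)$ and $\psi(N_j)$ gives an $x_\alpha$ whose power is larger in $\psi(M_j)$ and an $x_\beta$ whose power is larger in $\psi(N_j)$; I then draw the edges $x_\alpha\to t_j\to x_\beta$ in $\vec{\Theta}_M$ and the reversed edges in $\vec{\Theta}_N$, witnessed by some $T_{m t_j}\mid M$ with $x_\alpha\mid m$ and some $T_{m' t_j}\mid N$ with $x_\beta\mid m'$. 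The global identity $\psi(M)=\psi(N)$ forces every vertex in each graph to have both in-degree and out-degree at least one, so $\vec{\Theta}_M$ contains a directed cycle $C_M$, and reversing edges gives a cycle $C_N$ in $\vec{\Theta}_N$.

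I would then fix $C_M$ and let $j_l$ be the maximal $t$-index on it, with $x_{i_l}\to t_{j_l}\to x_{i_{l-1}}$ the adjacent edges; label the remaining vertices as $t_{j_1},\ldots,t_{j_l}$ and $x_{i_1},\ldots,x_{i_l}$, exactly as in Figure~\ref{directedgraph}. If $x_{i_l}\succ_\lex x_{i_{l-1}}$, the same ratio substitution used in the proof of Theorem~\ref{main-result} (no $t_0$ replacements being required) yields a monomial $M''$ with $M\succ_\lex M''$, and $M-M''$ is by construction a binary quasi-minor of $D_{\boldsymbol{a}}$ with at most one entry from each $D_{a_l}$, hence lies in $\calB$. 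If instead $x_{i_{l-1}}\succ_\lex x_{i_l}$, the reversed cycle $C_N$ produces an analogous outgoing edge from $N$. Normality is then concluded from \cite[Proposition~13.5, Proposition~13.15]{sturmfels1996grobner}, and Cohen--Macaulayness from \cite[Proposition~1, Theorem~1]{hochster1972rings}.

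The step that requires the most care is verifying that the binomial $M-M''$ really is a binary quasi-minor of $D_{\boldsymbol{a}}$ of the announced form: each pair $T_{m_s t_{j_s}}$ and $T_{\frac{x_{i_{s-1}}}{x_{i_s}} m_s t_{j_s}}$ must sit in the same column of $D_{a_{j_s}}$, which is immediate from the definition of $D_{a_l}$; and no two $T$-factors in a single term may come from the same column of a single $D_{a_l}$, which is guaranteed because $C_M$ is a simple cycle and each $t_{j_s}$ appears on it only once. Once these checks are in place, the argument goes through verbatim from the proof of Theorem~\ref{main-result}, with all simplifications coming from the absence of the $\mathbf{x}$-column.
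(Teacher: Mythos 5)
Your proposal is correct and is exactly the adaptation the paper intends: the paper gives no separate proof, stating only that the result is proved ``similar to Theorem~\ref{main-result},'' and your argument carries out that adaptation faithfully, with the right simplifications (no $t_0$ vertex, no $x$-monomial bookkeeping) and the right check that the resulting binomial is a binary quasi-minor of $D_{\boldsymbol{a}}$ of the announced form. No gaps.
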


\begin{proposition}
	Suppose that ideals $J_i$ are generated by subsets of  $x_1,\dots,x_n$. Let $I_i$ be powers of $J_i$. Then the family of ideals $I_1,\dots,I_r$ is of multi-fiber type.
\end{proposition}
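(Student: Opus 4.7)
The plan is to leverage the explicit Gr\"obner basis from Theorem~\ref{main-result}. Under the monomial order of Convention~\ref{conv:MonomialOrder}, $\ker(\phi)$ is generated by the binary quasi-minors of $C_{\boldsymbol{a}}=(\x\mid D_{\boldsymbol{a}})$ described there. I will split these generators into two families and handle each separately: those using no entry of the $\x$-column, and those using both entries of the $\x$-column.

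The quasi-minors of the first kind are precisely the binary quasi-minors of $D_{\boldsymbol{a}}$, so, regarded as elements of $k[\bT]\subset k[\x,\bT]$, they lie in $\ker(\psi)$, the toric ideal of the multi-fiber ring. These furnish the toric half of the multi-fiber type condition, and so it suffices to show that every binary quasi-minor $B$ of $C_{\boldsymbol{a}}$ that involves $x$'s lies in the ideal of linear (symmetric-algebra) relations of $\operatorname{Sym}(I_1\oplus\cdots\oplus I_r)$.

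By the lemma preceding Theorem~\ref{main-result}, such a $B$ has the explicit form
\[
B \;=\; x_{i_1}\prod_{s=1}^{v-1}T_{m_s t_{l_s}} \;-\; x_{i_v}\prod_{s=1}^{v-1}T_{m'_s t_{l_s}},
\]
where $x_{i_{s+1}}\mid m_s$ and $m'_s=(x_{i_s}/x_{i_{s+1}})\,m_s\in G_{l_s}$ for every $s$. For each $s$ the binomial
\[
L_s \;=\; x_{i_s}\,T_{m_s t_{l_s}} \;-\; x_{i_{s+1}}\,T_{m'_s t_{l_s}}
\]
is a relation of $\operatorname{Sym}(I_{l_s})$, since $x_{i_s}\cdot m_s = x_{i_{s+1}}\cdot m'_s$ with both $m_s,m'_s\in G_{l_s}$. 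A direct telescoping identity
\[
B \;=\; \sum_{s=1}^{v-1}\Bigl(\prod_{r<s}T_{m'_r t_{l_r}}\Bigr)\,L_s\,\Bigl(\prod_{r>s}T_{m_r t_{l_r}}\Bigr)
\]
then expresses $B$ as a $k[\x,\bT]$-linear combination of the $L_s$, so $B$ lies in the ideal of symmetric algebra relations.

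The one subtlety is verifying that each $m'_s$ is actually a (minimal) generator of $I_{l_s}$, so that $L_s$ is a bona fide symmetric-algebra relation; this is automatic, because $T_{m'_s t_{l_s}}$ is an entry of $D_{a_{l_s}}$ by virtue of appearing inside $B$. The telescoping identity itself is a routine expansion in which the interior $x_{i_s}$'s cancel in consecutive pairs. Combining the two cases shows that $\ker(\phi)$ is generated by the extension to $k[\x,\bT]$ of $\ker(\psi)$ together with linear symmetric-algebra relations, which is precisely the multi-fiber type property.
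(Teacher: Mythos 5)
Your argument is correct and is essentially the paper's: the paper also reduces each $x$-involving binary quasi-minor to $2\times 2$ $x$-minors (the linear symmetric-algebra relations) plus $T$-binary quasi-minors, via a step-by-step recursion $f=(x_iV_1-x_vW_1)V_2\cdots V_m+W_1(x_vV_2\cdots V_m-x_jW_2\cdots W_m)$ whose unwinding, in the single-cycle case you treat, is exactly your telescoping identity $B=\sum_s\bigl(\prod_{r<s}T_{m'_rt_{l_r}}\bigr)L_s\bigl(\prod_{r>s}T_{m_rt_{l_r}}\bigr)$. The only cosmetic difference is that you restrict at the outset to the chain-form generators supplied by the Gr\"obner basis theorem (which is legitimate, since a Gr\"obner basis generates the ideal), whereas the paper runs the same reduction on an arbitrary $x$-binary quasi-minor and so allows a residual $T$-binary quasi-minor factor when the chain closes early.
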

\begin{proof}
	We prove that every $x$-binary quasi-minor of $C_{\boldsymbol{a}}$ (binary quasi-minors that involve $x$) can be generated by $2\times 2$ $x$-minors and $T$-binary quasi-minors of $C_{\boldsymbol{a}}$. Let $f=x_iV_1V_2\dots V_m-x_jW_1W_2\dots W_m$ ($V_i$ and $W_i$ are $T$ variables). Without loss of generality we may assume that $x_i$ and $W_1$ are in the same row and $W_1$ and $V_1$ are in the same column. If $V_1$ and $x_j$ are in the same row, then we have 
	\begin{gather*}
	f=x_iV_1V_2\dots V_m-x_jW_1W_2\dots W_m-x_jW_1V_2\dots V_m+x_jW_1V_2\dots V_m\\
	=(x_iV_1-x_jW_1)V_2\dots V_m+x_jW_1(V_2\dots V_m-W_2\dots W_m).
	\end{gather*}
	
	If $V_1$ and $x_j$ are not in the same row, then there is an $x_v$ which is in the same row with $V_1$. We have
	\begin{gather*}
	f=x_iV_1V_2\dots V_m-x_jW_1W_2\dots W_m-x_vW_1V_2\dots V_n+x_vW_1V_2\dots V_m\\
	=(x_iV_1-x_vW_1)V_2\dots V_m+W_1(x_vV_2\dots V_m-x_jW_2\dots W_m).
	\end{gather*}
	We can continue this procedure until all generators are either $2\times 2$ $x$-minors or $T$-binary quasi-minors of $C_{\boldsymbol{a}}$.   
\end{proof}

\begin{remark}
	Instead of lexicographic order we can put graded reverse lexicographic order on $k[\x,\bT]$, and in much simpler way we can prove that binary quasi-minors form a Gr\"{o}bner basis for $\ker(\phi)$ (resp. $\ker(\psi)$). But this comes at the cost of leading terms which are no longer squarefree.
\end{remark}

\begin{example}\label{counter}
	We consider the polynomial ring $R=k[x_1,x_2,x_3]$. Let $I_1=\langle x_1^2,x_1x_2,x_2^2\rangle$, $I_2=\langle x_2^2,x_2x_3,x_3^2\rangle$, $I_3=\langle x_1^2,x_1x_3,x_3^2\rangle$. We consider the multi-Rees algebra $R[I_1t_1,I_2t_2,I_3t_3]$. We have
	$$
	C_{\boldsymbol{a}}=\begin{bmatrix}
	x_1&T_{x_1x_2t_1}&T_{x_1^2t_1} &             &             &T_{x_1x_3t_3}&T_{x_1^2t_3} \\
	x_2&T_{x_2^2t_1} &T_{x_1x_2t_1}&T_{x_2x_3t_2}&T_{x_2^2t_2} &             &             \\
	x_3&             &             &T_{x_3^2t_2} &T_{x_2x_3t_2}&T_{x_3^2t_3} &T_{x_1x_3t_3}\\
	\end{bmatrix}.	
	$$
	So that $\ker(\phi)$ is generated by $T$-binary quasi-minors and $2\times 2$ $x$-minors of $C_{\boldsymbol{a}}$. But the reduced Gr\"{o}bner basis has some $x$-binary quasi-minors that are not $2\times 2$-minors. For example 
	$$
	x_2T_{x_2x_3t_2}T_{x_1^2t_3}-x_1T_{x_2^2t_2}T_{x_1x_3t_3},
	$$
	is an $x$-binary quasi-minor. Its leading term is $x_2T_{x_2x_3t_2}T_{x_1^2t_3}$, which is not divisible by leading term of any $2\times 2$-minor. 
	
	We see that if we keep everything in the order described in Convention~\ref{conv:MonomialOrder}, except we put the order $x_i\succ T_{mt_j}$, then again with a similar proof we see binary quasi-minors form a Gr\"{o}bner basis for $\ker(\phi)$ (resp. $\ker(\psi)$). Then in this case all conditions of~\cite[Theorem 5.1]{HHV05}, are satisfied. However, under this order, the leading term of binary quasi-minor
	$$
	x_2T_{x_1^2t_1}T_{x_3^2t_3}-x_3T_{x_1x_2t_1}T_{x_1x_3t_3},
	$$ 
	which is $x_2T_{x_1^2t_1}T_{x_3^2t_3}$, is not divisible by leading term of any $2\times 2$-minor. Then \cite[Theorem 5.1]{HHV05}, does not hold. Even though, we know that this family of ideals is of multi-fiber type.
\end{example}

\begin{example}
	Let $R=k[x_1,x_2,x_3]$. Let $I_1=\langle x_1,x_2\rangle$, $I_2=\langle x_2,x_3\rangle$, $I_3=\langle x_1,x_3\rangle$. Then $\ker(\phi)$ is
	$$
	\langle x_1T_{x_2t_1}-x_2T_{x_1t_1},x_1T_{x_3t_3}-x_3T_{x_1t_3},x_2T_{x_3t_2}-x_3T_{x_2t_2},T_{x_1t_1}T_{x_2t_2}T_{x_3t_3}-T_{x_1t_3}T_{x_2t_1}T_{x_3t_2}\rangle.
	$$
	We see that $C_{\boldsymbol{a}}$ has the form below
	$$
	\begin{bmatrix*}
	x_1&T_{x_1t_1}&   &T_{x_1t_3}\\
	x_2&T_{x_2t_1}&T_{x_2t_2}&   \\
	x_3&   &T_{x_3t_2}&T_{x_3t_3}
	\end{bmatrix*}.
	$$
	
	This special example can also be recovered by using the theory of Rees algebras of modules, as follows. The module $M=I_1\oplus I_2\oplus I_3$ has a linear resolution
	$$
	0\rightarrow R^3\xrightarrow{\Phi}R^6\rightarrow M\rightarrow 0
	$$
	where
	$$
	\Phi=\begin{bmatrix*}
	x_2 &0   &0   \\
	-x_1&0   &0   \\
	0   &x_3 &0   \\
	0   &-x_2&0   \\
	0   &0   &x_3 \\
	0   &0   &-x_1
	\end{bmatrix*}.
	$$
	Hence $\pd{M}=1$. Furthermore, since $M$ is free in codimension 1, and 4-generated in codimension 2, by \cite[Proposition 4.11]{simis2003rees} the Rees algebra of $M$, which is the multi-Rees algebra in question, has the expected defining equations, in the sense that
	$$
	\mathcal{R}(M)\cong R[T_1,T_2,T_3,T_4,T_5,T_6]/\langle[x_1 x_2 x_3]B,\det{B}\rangle
	$$
	where
	$$
	B=\begin{bmatrix*}
	-T_2&0   &-T_6\\
	T_1 &-T_4&0   \\
	0   &T_3 &T_5
	\end{bmatrix*}
	$$
	is the matrix defined by the equation
	$$
	[T]\Phi=[x]B.
	$$
	In this special example equations of multi-fiber ring are also known by~\cite[Proposition 3.1]{V95}. Moreover, in this example binary quasi-minors form a universal Gr\"{o}bner basis (c.f.\cite[Corllary 5.12]{HHO18}).
\end{example}

\begin{example}\label{ex-graph}
	Let $R=k[x_1,x_2,x_3,x_4]$. Let $I_1=\langle x_1^2,x_1x_2,x_2^2\rangle$, $I_2=\langle x_1^3,x_1^2x_3,x_1x_3^2,x_3^3\rangle$, $I_3=\langle x_2^2,x_2x_3,x_3^2\rangle$, $I_4=\langle x_1,x_4\rangle$, $I_5=\langle x_2,x_4\rangle$. We see that $C_{\boldsymbol{a}}$ has the form below
	$$
	\begin{bmatrix*}
	x_1&T_{x_1x_2t_1}&T_{x_1^2t_1}&T_{x_1x_3^2t_2}&T_{x_1^2x_3t_2}&T_{x_1^3t_2}   &             &             &T_{x_1t_4}&          \\
	x_2&T_{x_2^2t_1} &T_{x_1x_2t_1}&               &               &               &T_{x_2x_3t_3}&T_{x_2^2t_3} &          &T_{x_2t_5}\\
	x_3&             &            &T_{x_3^3t_2}   &T_{x_1x_3^2t_2}&T_{x_1^2x_3t_2}&T_{x_3^2t_3} &T_{x_2x_3t_3}&          &          \\
	x_4&             &            &               &               &               &             &             &T_{x_4t_4}&T_{x_4t_5}
	\end{bmatrix*}.
	$$
	For example 
	$$
	T_{x_1x_2t_1}T_{x_2^2t_3}T_{x_1x_3^2t_2}-T_{x_2^2t_1}T_{x_2x_3t_3}T_{x_1^2x_3t_2}
	$$
	is one of generators. We cannot apply \cite[Proposition 4.11]{simis2003rees} and \cite[Proposition 3.1]{V95} on this example.
	
	As we said in the introduction, when powers of ideals is 1, the multi-fiber ring of these ideals is actually toric ring of edge ideals. This brings the idea to describe binary quasi-minors as cycles of a bipartite graph, even if when powers of ideals are not necessarily 1. We show it in this example. We will also argue this generally in Discussion~\ref{cycle}. Now, we create a bipartite graph. One partition of vertices are $x$'s and other partition are 1 and $T_{mt_j}$, where $m$ are generators of $J_j^{a_j-1}$ ($I_j=J_j^{a_j}$). We connect all $x$'s to 1. We also connect all $x$'s that are generators of $J_j$ to $T_{mt_j}$. In this example our graph will be as Figure~\ref{fig:bipartiteincidencegraph}. 
	\begin{figure}
		\centering
		\begin{tikzpicture}
		\node (x1) at (0,7) {$x_1$};
		\node (x2) at (0,6) {$x_2$};
		\node (x3) at (0,5) {$x_3$};
		\node (x4) at (0,4) {$x_4$};
		
		\node (1)       at (10,10)  {$1$};
		\node (Tx2t1)   at (10,9)   {$T_{x_2t_1}$};
		\node (Tx1t1)   at (10,8)   {$T_{x_1t_1}$};
		\node (Tx32t2)  at (10,7)   {$T_{x_3^2t_2}$};
		\node (Tx1x3t2) at (10,6)   {$T_{x_1x_3t_2}$};
		\node (Tx12t2)  at (10,5)   {$T_{x_1^2t_2}$};
		\node (Tx3t3)   at (10,4)   {$T_{x_3t_3}$};
		\node (Tx2t3)   at (10,3)   {$T_{x_2t_3}$};
		\node (Tt4)     at (10,2)   {$T_{t_4}$};
		\node (Tt5)     at (10,1)   {$T_{t_5}$};
		
		\draw (1)--(x1) (1)--(x2) (1)--(x3) (1)--(x4) (Tx2t1)--(x1) (Tx2t1)--(x2) (Tx1t1)--(x1) (Tx1t1)--(x2) (Tx32t2)--(x1) (Tx32t2)--(x3) (Tx1x3t2)--(x1) (Tx1x3t2)--(x3) (Tx12t2)--(x1) (Tx12t2)--(x3) (Tx3t3)--(x2) (Tx3t3)--(x3) (Tx2t3)--(x2) (Tx2t3)--(x3) (Tt4)--(x1) (Tt4)--(x4) (Tt5)--(x2) (Tt5)--(x4);
		\end{tikzpicture}
		\caption{The bipartite graph for Example~\ref{ex-graph}}
		\label{fig:bipartiteincidencegraph}
	\end{figure}
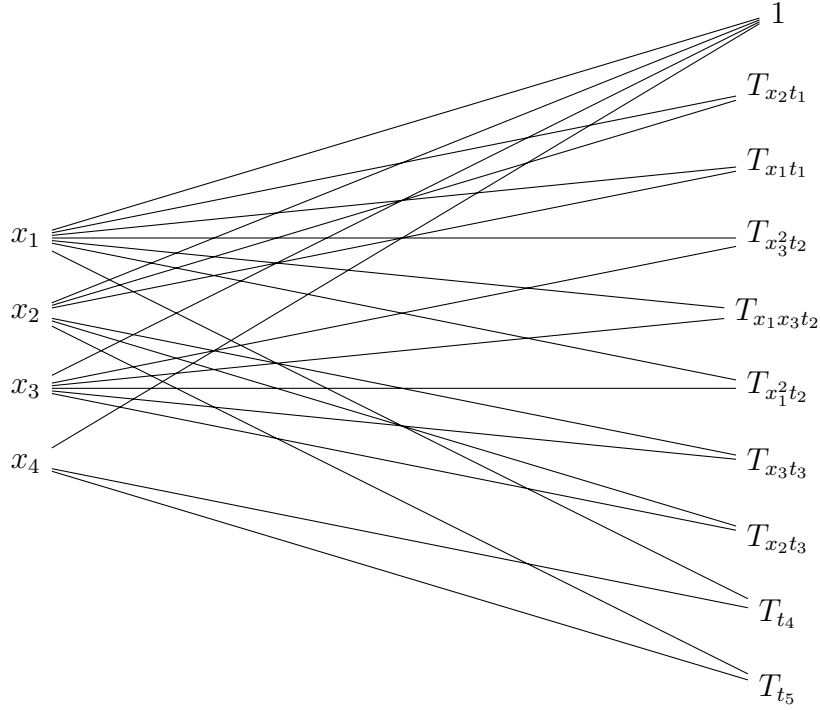
	Now every cycle in this graph gives us a binary quasi-minor. If we label edges with numbers in order, then for every even edge, where one vertex of this edge is $x_l$, two cases occur: if the other vertex is 1, then we get a factor $x_l$ in one term of binary quasi-minor. If the other vertex is $T_{mt_j}$, then we get a factor $T_{x_lmt_j}$ for the mentioned term. We do the same thing with odd edges to get factors of other term of binary quasi-minor. For example, the binary quasi-minor 
	$$
	T_{x_1x_2t_1}T_{x_2^2t_3}T_{x_1x_3^2t_2}-T_{x_2^2t_1}T_{x_2x_3t_3}T_{x_1^2x_3t_2}
	$$
	is corresponding to the cycle given in Figure~\ref{cycle-1}.
	\begin{figure}
		\centering
		\begin{tikzpicture}
		\node (x1) at (0,3) {$x_1$};
		\node (x2) at (0,2) {$x_2$};
		\node (x3) at (0,1) {$x_3$};

		\node (Tx2t1)   at (3,3)   {$T_{x_2t_1}$};
		\node (Tx2t3)   at (3,2)   {$T_{x_2t_3}$};
		\node (Tx1x3t2) at (3,1)   {$T_{x_1x_3t_2}$};
		
		\draw (Tx2t1)--(x1) (Tx2t1)--(x2) (Tx2t3)--(x2) (Tx2t3)--(x3) (Tx1x3t2)--(x1) (Tx1x3t2)--(x3);   
		\end{tikzpicture}
		\caption{Corresponding cycle to $T_{x_1x_2t_1}T_{x_2^2t_3}T_{x_1x_3^2t_2}-T_{x_2^2t_1}T_{x_2x_3t_3}T_{x_1^2x_3t_2}$}
		\label{cycle-1}
	\end{figure}
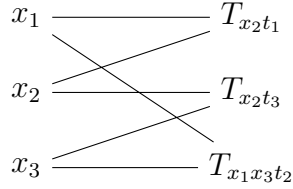
	Another easy example is $2\times 2$-minor $x_1T_{x_4t_4}-x_4T_{x_1t_4}$. The corresponding cycle is shown in Figure~\ref{cycle-2}.
	\begin{figure}
		\centering
		\begin{tikzpicture}
		\node (x1) at (0,3) {$x_1$};
		\node (x4) at (0,1) {$x_4$};

		\node (1)   at (3,3)   {$1$};
		\node (Tt4) at (3,1)   {$T_{t_4}$};
		
		\draw (1)--(x1) (1)--(x4) (Tt4)--(x1) (Tt4)--(x4) ;   
		\end{tikzpicture}
		\caption{Corresponding cycle to $x_1T_{x_4t_4}-x_4T_{x_1t_4}$}
		\label{cycle-2}
	\end{figure}
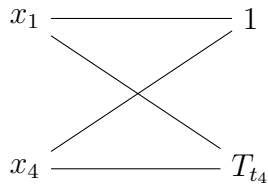
\end{example}

\begin{discussion}\label{cycle}
	Let $\alpha$ be a binary quasi-minor of $C_{\boldsymbol{a}}$. We claim that $\alpha$ is corresponding to a cycle as we mentioned in Example~\ref{ex-graph}. We prove the claim for the case that binary quasi-minor involves $x$. The other case is similar. Suppose $i_1,\dots,i_v$ are the rows that factors of $\alpha$ appear. Without loss of generality we may assume that $x$'s appear in the rows $i_1$ and $i_v$. Let one term of $\alpha$ be 
	$$
	x_{i_1}T_{m_1j_{l_1}}T_{m_2j_{l_2}}\dots T_{m_{v-1}j_{l_{v-1}}},
	$$ 
	where $T_{m_s{l_s}}$ is in the row $i_{s+1}$, and $l_s$ are not necessarily distinct. Then without loss of generality we may assume that other term of $\alpha$ is
	$$
	x_{i_v}T_{\frac{x_{i_1}}{x_{i_2}}m_1j_{l_1}}T_{\frac{x_{i_2}}{x_{i_3}}m_2j_{l_2}}\dots T_{\frac{x_{i_{v-1}}}{x_{i_v}}m_{v-1}j_{l_{v-1}}}.
	$$
	Then the cycle starts from 1 going to $x_{i_1}$. But $T_{m_1j_{l_1}}$ is in the row $i_2$. Hence in the bipartite graph, $x_{i_1}$ is attached to $T_{\frac{m_1}{x_{i_2}}j_{l_1}}$. This edge creates $T_{\frac{x_{i_1}}{x_{i_2}}m_1j_{l_1}}$. The next edge in the cycle is going from $T_{\frac{m_1}{x_{i_2}}j_{l_1}}$ to $x_{i_2}$. This edge creates $T_{m_1j_{l_1}}$. Actually, vertices in one partition of the bipartite graph are $x_{i_1},\dots,x_{i_v}$, and in other partition vertices are $1,T_{\frac{m_1}{x_{i_2}}j_{l_1}},\dots,T_{\frac{m_{v-1}}{x_{i_v}}j_{l_{v-1}}}$. The vertex $T_{\frac{m_{s-1}}{x_{i_s}}j_{l_{s-1}}}$ is attached to vertices $x_{i_{s-1}}$ and $x_{i_s}$. These edges create $T_{\frac{x_{i_{s-1}}}{x_{i_s}}m_{s-1}j_{l_{s-1}}}$ and $T_{m_{s-1}j_{l_{s-1}}}$. Also, for $1<s<v$, $x_{i_s}$ is attached to $T_{\frac{{m_{s-1}}}{x_{i_s}}j_{l_{s-1}}}$ and $T_{\frac{m_s}{x_{i_{s+1}}}j_{l_s}}$. Finally, $x_{i_v}$ is attached to 1 and $T_{\frac{m_{v-1}}{x_{i_v}}j_{l_{v-1}}}$. From the argument we see that every such a cycle gives us a binary quasi-minor.
\end{discussion}

\begin{remark}
	We see that the directed cycles $C_M$ and $C_N$ in the proof of Theorem~\ref{main-result} (which have the same vertices) correspond to two family of bipartite graphs as argued in Discussion~\ref{cycle}. Each pair of these graphs which correspond to $C_M$ and $C_N$ have different vertices.
\end{remark}

\section{Concluding remarks and questions}\label{sec:Conclusion}

We consider the polynomial ring $k[\x]$. We also consider the poset $L=\{x_1,\dots,x_n\}$. Let $L_i$ ($1\le i\le r$) be subposets of $L$. Suppose $M_1,\ldots,M_r$ be a collection of monomials in $k[\x]$. We take ideals $I_i=L_i$-$\Borel(M_i)$, which means these ideals are generated by monomials which are obtained by Borel moves starting from $M_i$ on variables that are in $L_i$. We consider the multi-Rees algebra $k[\x][I_1t_1,\dots,I_rt_r]$. Similar to previous section we define $\bT$ variables and the polynomial ring $k[\x,\bT]$. Also, we define similarly the map $\phi$. We define a bipartite graph, where one partition of vertices are $t_1,\dots,t_r$ and the other partition of vertices are $x_1,\dots,x_n$. Edges are made by connecting variables in the posets $L_i$ to $t_i$. In~\cite{DJ20}, we have showed that if this bipartite graph is chordal (that means every cycle of length greater or equal to 6 has a chord), then there is a Gr\"{o}bner basis of quadrics with lexicographic order for $\ker(\phi)$. We pose the following question for interested reader:

\begin{question}
	If the bipartite incidence graph is non-chordal, how do the equations look like? Let consider the quasi-matrix $C_{\boldsymbol{a}}$. Since we deal with Borel moves for some $T_{mt_j}$, $m$ may not be in $I_j$. We delete these $T$ variables from $C_{\boldsymbol{a}}$ and we obtain another quasi-matrix, say, $A$. If we have monomial order as given in Convention~\ref{conv:MonomialOrder}, then is the Gr\"{o}bner basis of $\ker(\phi)$ formed by binary quasi-minors of $A$?
\end{question}  

Also, in~\cite{DJ20}, we have showed that when the associate incidence bipartite graph is chordal, then the multi-Rees algebra is Koszul. We have also posed this question in~\cite{DJ20}, that is this a necessary condition for the multi-Rees algebra $k[\x][I_1t_1,\dots,I_rt_r]$ to be Koszul? Here, in the case which is discussed in the present paper we show that this is a necessary condition.

\begin{proposition}\label{pro-koszul}
	Suppose that ideals $J_i$ are generated by subsets of $x_1,\dots,x_n$. Let $I_i=J_i^{a_i}$. Suppose $\calI=\{I_1,\dots,I_r\}$. If the incidence bipartite graph associated to the multi-Rees algebra is non-chordal, then the multi-Rees algebra $k[\x][\calI\bt]$ is not Koszul.
\end{proposition}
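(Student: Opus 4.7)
The plan is to use the standard necessary condition that a Koszul standard-graded $k$-algebra has a defining ideal generated in degree $2$. Under the grading where each $x_i$ and each $T_{mt_j}$ has degree $1$, every $n\times n$ binary quasi-minor is homogeneous of degree $n$, and Theorem~\ref{main-result} provides a Gr\"obner basis of $\ker(\phi)$ consisting of binary quasi-minors. From a chordless cycle of length $2l\ge 6$ in the incidence bipartite graph I will exhibit a degree-$l$ minimal generator of $\ker(\phi)$; since $l\ge 3$, this shows $k[\x][\calI\bt]$ is not quadratic, hence not Koszul.

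Non-chordality supplies a chordless cycle
\[
t_{j_1} - x_{i_1} - t_{j_2} - x_{i_2} - \cdots - t_{j_l} - x_{i_l} - t_{j_1},
\]
so that for each $s$, both $x_{i_{s-1}}$ and $x_{i_s}$ are generators of $J_{j_s}$ (indices modulo $l$) while no other edge joins the listed $x$- and $t$-vertices. By the correspondence of Discussion~\ref{cycle}, this cycle yields the binary quasi-minor
\[
f \;=\; \prod_{s=1}^{l} T_{x_{i_s}^{a_{j_s}}\, t_{j_s}} \;-\; \prod_{s=1}^{l} T_{x_{i_{s-1}} x_{i_s}^{a_{j_s}-1}\, t_{j_s}} \;\in\; \ker(\phi),
\]
homogeneous of degree $l\ge 3$; each $x_{i_{s-1}} x_{i_s}^{a_{j_s}-1}$ lies in $I_{j_s}=J_{j_s}^{a_{j_s}}$ since $x_{i_{s-1}}, x_{i_s}\in J_{j_s}$.

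The heart of the proof is showing that $\operatorname{in}(f)$ is a minimal generator of the monomial initial ideal $\operatorname{in}(\ker(\phi))$; since the minimal number of generators of a graded ideal (together with their degrees) equals that of any of its initial ideals, this forces $\ker(\phi)$ itself to have a minimal generator of degree $l$. By Theorem~\ref{main-result}, $\operatorname{in}(\ker(\phi))$ is generated by the leading monomials of binary quasi-minors, so it suffices to check that no binary quasi-minor of strictly smaller degree has leading monomial dividing $\operatorname{in}(f)$. If $g$ were such a quasi-minor, then by Discussion~\ref{cycle} the cycle encoding $g$ visits a proper subset of the $t_{j_s}$'s while keeping the same $x_{i_s}$ as the $x$-corner at each retained $t_{j_s}$ -- these corners are pinned down because the $T$-variable $T_{\mu_s t_{j_s}}$ appearing in $\operatorname{in}(f)$ has $x$-support exactly $\{x_{i_{s-1}}, x_{i_s}\}$. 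In order for the shorter cycle to close, it would have to use an edge $x_{i_{s'}} - t_{j_s}$ joining non-consecutive vertices of the original cycle, i.e.\ a chord, contradicting chordlessness.

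The main obstacle I anticipate is the combinatorial bookkeeping in the previous paragraph: converting the monomial divisibility $\operatorname{in}(g)\mid\operatorname{in}(f)$ into the graph-theoretic statement that the cycle for $g$ must use a chord of the original cycle. I expect this to be a careful unpacking of Discussion~\ref{cycle}, using that the $x$-support of each $T$-variable appearing in $\operatorname{in}(f)$ is pinned down and consists of two consecutive $x$-vertices of the chordless cycle. Once the non-divisibility is established, $f$ is a minimal generator of degree $l\ge 3$, so $\ker(\phi)$ fails to be generated in degree $2$ and $k[\x][\calI\bt]$ is not Koszul.
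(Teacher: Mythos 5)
Your choice of the cycle binomial $f$ and your target conclusion (that $\ker(\phi)$ has a minimal generator of degree $l\ge 3$, hence the algebra is not quadratic and so not Koszul) agree with the paper, but the mechanism you use to certify minimality has a genuine gap. You pass to the initial ideal and invoke the claim that the minimal generating degrees of a graded ideal equal those of its initial ideal. That claim is false: the correct general statement is the inequality $\beta_{0,j}(I)\le\beta_{0,j}(\operatorname{in}_{\prec}(I))$, which points in the wrong direction for your purposes. For instance, $I=\langle x^2+y^2,\ xy\rangle\subset k[x,y]$ is generated in degree $2$, yet its initial ideal for the lexicographic order with $x\succ y$ is $\langle x^2,\ xy,\ y^3\rangle$, which needs a generator in degree $3$. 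So even a complete verification that $\operatorname{in}(f)$ is a minimal generator of $\operatorname{in}(\ker(\phi))$ of degree $l\ge 3$ would not rule out $\ker(\phi)$ being generated by quadrics; this is precisely the familiar phenomenon of quadratically generated toric ideals admitting no quadratic Gr\"obner basis.

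The paper's proof avoids the initial ideal and argues on the ideal itself: if $\alpha=\sum_i\beta_i\gamma_i$ with each $\gamma_i$ a quadric in $\ker(\phi)$, then some monomial of some $\gamma_j$ must divide the squarefree first term of $\alpha$, hence must equal $T_{x_s^{a_s}t_s}T_{x_u^{a_u}t_u}$ for two distinct cycle indices $s,u$. Because the cycle is chordless of length $\ge 6$, any \emph{other} degree-two monomial in the same $\phi$-fiber would require both $x_u\in J_s$ and $x_s\in J_u$, which cannot happen; so that monomial is alone in its fiber and therefore cannot appear in any degree-two element of the toric ideal, a contradiction. Your third-paragraph combinatorics (the pinned $x$-supports and the observation that a shorter closing path would have to use a chord) is exactly the right kind of bookkeeping, but it needs to be applied to this fiber computation for arbitrary quadrics of $\ker(\phi)$, not to divisibility among leading terms inside $\operatorname{in}(\ker(\phi))$.
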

\begin{proof}
	Without loss of generality we may assume that we have a cycle shown in Figure~\ref{koszul}, of length $\ge 6$, which does not have a chord.
	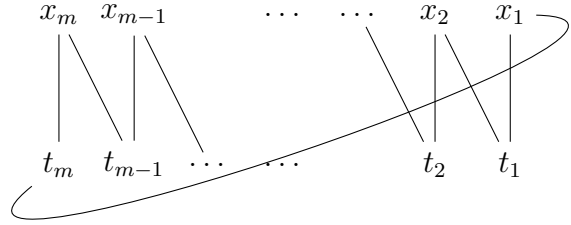
\begin{figure}
		\centering
		\begin{tikzpicture}
		\begin{scope}
		% vertices
		\node (x1)    at  (6,2) {$x_1$};
		\node (x2)    at  (5,2) {$x_2$};
		\node (a)    at  (4,2) {$\dots$};
		\node ()      at  (3,2) {$\dots$};
		\node (xm-1)      at  (1,2) {$x_{m-1}$};
		\node (xm)      at  (0,2) {$x_m$};
		
		\node (t1)    at  (6,0) {$t_1$};
		\node (t2)    at  (5,0) {$t_2$};
		\node ()      at  (3,0) {$\dots$};
		\node (b)      at  (2,0) {$\dots$};
		\node (tm-1)  at  (1,0) {$t_{m-1}$};
		\node (tm)  at  (0,0) {$t_m$};
		
		\end{scope}    
		
		%edges
		\begin{scope}
		\path[-] (x1) edge (t1);
		\path[-] (t1) edge (x2);
		\path[-] (x2) edge (t2);
		\path[-] (t2) edge (a);
		\path[-] (xm) edge (tm-1);
		\path[-] (xm) edge (tm);
		\path[-] (tm-1) edge (xm-1);
		\path[-] (xm-1) edge (b);
		\path[-] (tm) edge[bend right=160] (x1);
		\end{scope}
		\end{tikzpicture}
		\caption{The cycle in Proposition~\ref{pro-koszul}}
		\label{koszul}
	\end{figure}
	Then we see that the binary quasi-minor 
	$$
	\alpha=T_{x_1^{a_1}t_1}T_{x_2^{a_2}t_2}\dots T_{x_m^{a_m}t_m}-T_{x_1^{a_1-1}x_2t_1}T_{x_2^{a_2-1}x_3t_2}\dots T_{x_m^{a_m-1}x_1t_m},
	$$
	cannot be generated by any quadrics. Because if $\alpha=\sum \beta_i\gamma_i$, where $\gamma_i$ are quadrics, then there is at least one $\gamma_j$ such that two factors of the first term of $\alpha$ is one term of $\gamma_j$. Then $\phi(\gamma)\neq 0$, which is a contradiction. 
\end{proof}

We have a similar argument for the multi-fiber ring.

\begin{proposition}
	Suppose that ideals $J_i$ are generated by subsets of $x_1,\dots,x_n$. Let $I_i=J_i^{a_i}$. Suppose $\calI=\{I_1,\dots,I_r\}$. If the incidence bipartite graph associated to the multi-fiber ring $k[\calI\bt]$ is non-chordal, then the multi-fiber ring is not Koszul.
\end{proposition}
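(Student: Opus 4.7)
The plan is to mirror the argument of Proposition~\ref{pro-koszul}, replacing $\ker(\phi)$ by $\ker(\psi)$ throughout. Since the incidence bipartite graph is non-chordal, it contains an induced cycle of length $2m\ge 6$; after relabeling, I can write it as $x_1,t_1,x_2,t_2,\dots,x_m,t_m,x_1$. The chordless condition translates into the combinatorial statement that, within this vertex set, $x_k$ is a generator of $J_j$ if and only if $k\equiv j$ or $k\equiv j+1\pmod{m}$. In particular both $x_j$ and $x_{j+1}$ are generators of $J_j$, so that $x_j^{a_j}$ and $x_j^{a_j-1}x_{j+1}$ are both minimal generators of $I_j$.

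Next, I will consider the same binary quasi-minor used in Proposition~\ref{pro-koszul},
\[
\alpha=T_{x_1^{a_1}t_1}T_{x_2^{a_2}t_2}\cdots T_{x_m^{a_m}t_m}-T_{x_1^{a_1-1}x_2t_1}T_{x_2^{a_2-1}x_3t_2}\cdots T_{x_m^{a_m-1}x_1t_m},
\]
now viewed as an element of $\ker(\psi)$. Both terms map under $\psi$ to $(x_1^{a_1}\cdots x_m^{a_m})(t_1\cdots t_m)$, so $\alpha\in\ker(\psi)$, and $\deg\alpha=m\ge 3$. Since a Koszul algebra must have a quadratically generated defining ideal, it is enough to show $\alpha$ does not lie in the ideal $Q$ of $k[\bT]$ generated by the quadrics of $\ker(\psi)$.

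The crux is a leading-monomial extraction: assuming $\alpha=\sum_i\beta_i\gamma_i$ with each $\gamma_i$ a quadric in $\ker(\psi)$, the monomial $T_{x_1^{a_1}t_1}\cdots T_{x_m^{a_m}t_m}$ must appear in some $\beta_i\gamma_i$, which forces the existence of a nonzero quadric $\gamma=T_{x_p^{a_p}t_p}T_{x_q^{a_q}t_q}-T_{n_1t_p}T_{n_2t_q}\in\ker(\psi)$ with $p\neq q$, $n_1n_2=x_p^{a_p}x_q^{a_q}$, $n_1\in I_p$, $n_2\in I_q$. The supports of $n_1$ and $n_2$ lie inside $\{x_p,x_q\}$; chordlessness then prevents any nontrivial mixing unless $p,q$ are cyclically adjacent in the cycle, and even in that case (say $q=p+1$) the fact that $x_p\notin J_{p+1}$ forces $n_2=x_{p+1}^{a_{p+1}}$, whence $n_1=x_p^{a_p}$ and $\gamma=0$, a contradiction.

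The main obstacle, exactly as in the multi-Rees case, is the small combinatorial verification that chordlessness actually rules out all alternate factorizations of $x_p^{a_p}x_q^{a_q}$ as a product of a monomial in $I_p$ with a monomial in $I_q$. Once that check is done, $\alpha\notin Q$ exhibits a degree-$m$ generator of $\ker(\psi)$ not captured by quadrics, so the multi-fiber ring $k[\calI\bt]$ is not Koszul.
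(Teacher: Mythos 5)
Your proposal is correct and takes essentially the same approach as the paper, whose own proof of this proposition simply defers to the argument of Proposition~\ref{pro-koszul}: you use the identical binary quasi-minor $\alpha$ and rule out a nonzero quadric of $\ker(\psi)$ having $T_{x_p^{a_p}t_p}T_{x_q^{a_q}t_q}$ as a term. Your explicit chordlessness analysis of the factorization $n_1n_2=x_p^{a_p}x_q^{a_q}$ merely fills in the step the paper leaves implicit.
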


\section{Acknowledgments}
	I would like to express my gratitude to my former advisor, Mark Johnson, for his valuable comments. I would also like to express my gratitude to Michael DiPasquale, as he gave many valuable comments to make this paper better.

\begin{bibdiv}
	\begin{biblist}
		\bib{CLS19}{article}{
			title={Multi-Rees algebras and toric dynamical systems},
			author={Cox, David A.}
			author={Lin, Kuei-Nuan},
			author={Sosa, Gabriel},
			journal={Proceedings of the American Mathematical Society},
			volume={147},
			year={2019},
			number={11},
			pages={4605--4616}
		}
		\bib{BC17}{article}{
			title={Linear resolutions of powers and products},
			author={Bruns, Winfried},
			author={Conca, Aldo},
			journal={Singularities and computer algebra},
			pages={47--69},
			year={2017},
			publisher={Springer}
		}
		\bib{BC17b}{article}{
			title={Products of {B}orel fixed ideals of maximal minors},
			author={Bruns, Winfried},
			author={Conca, Aldo},
			journal={Advances in Applied Mathematics},
			volume={91},
			year={2017},
			pages={1--23}
		}
		\bib{DST95}{article}{
			title={Gr\"{o}bner bases and triangulations of the second hypersimplex},
			author={De Loera, Jes{\'u}s A.}
			author={Sturmfels, Bernd},
			author={Thomas, Rekha R.},
			journal={Combinatorica},
			volume={15},
			number={3},
			pages={409--424},
			year={1995}
		}
	   \bib{DJ20}{article}{
	title= {Koszul multi-Rees algebras of principal $L$-Borel Ideals},
	author={DiPasquale, Michael},
	author={Jabbar Nezhad, Babak},
	journal={Journal of Algebra},
	volume={581},
	pages={353--385},
	year={2021}
}
		\bib{eisenbud2003rees}{article}{
			title={What is the Rees algebra of a module?},
			author={Eisenbud, David},
			author={Huneke, Craig},
			author={Ulrich, Bernd},
			journal={Proceedings of the American Mathematical Society},
			volume={131},
			number={3},
			pages={701--708},
			year={2003}
		}
		\bib{HHO18}{book}{
			title={Binomial ideals},
			author={Herzog, J{\"u}rgen},
			author={Hibi, Takayuki},
			author={Ohsugi, Hidefumi},
			volume={279},
			year={2018},
			publisher={Springer}
		}
		\bib{HHV05}{article}{
			title={Ideals of fiber type and polymatroids},
			author={Herzog, J{\"u}rgen},
			author={Hibi, Takayuki},
			author={Vladoiu, Marius},
			journal={Osaka Journal of Mathematics},
			volume={42},
			number={4},
			pages={807--829},
			year={2005}
		}
		\bib{hochster1972rings}{article}{
			title={Rings of invariants of tori, Cohen-Macaulay rings generated by monomials, and polytopes},
			author={Hochster, Melvin},
			journal={Annals of Mathematics},
			volume={96},
			pages={318--337},
			year={1972}
		}
		\bib{jabarnejad2016rees}{article}{
			title={Equations defining the multi-Rees algebras of powers of an ideal},
			author={Jabarnejad, Babak},
			journal={Journal of Pure and Applied Algebra},
			volume={222},
			pages={1906--1910},
			year={2018}
		}
		\bib{kustinpoliniulrich2017blowup}{article}{
			title={The equations defining blowup algebras of height three Gorenstein ideals},
			author={Kustin, Andrew R.},
			author={Polini, Claudia},
			author={Ulrich, Bernd},
			journal={Algebra and Number Theory},
			volume={11},
			number={7},
			pages={1489--1525},
			year={2017}
		}
		\bib{LP14}{article}{
			title={Rees algebras of truncations of complete intersections},
			author={Lin, Kuei-Nuan},
			author={Polini, Claudia},
			journal={Journal of Algebra},
			volume={410},
			year={2014},
			pages={36--52}
		}
		\bib{morey1996rees}{article}{
			title={Equations of blowups of ideals of codimension two and three},
			author={Morey, Susan},
			journal={Journal of Pure and Applied Algebra},
			volume={109},
			pages={197--211},
			year={1996}
		}
		\bib{moreyulrich1996rees}{article}{
			title={Rees algebras of ideals with low codimension},
			author={Morey, Susan},
			author={Ulrich, Bernd},
			journal={Proc. Amer. Math. Soc.},
			volume={124},
			pages={3653--3661},
			year={1996}
		}
		\bib{OHT13}{article}{
			title={Graver basis for an undirected graph and its application to testing the beta model of random graphs},
			author={Ogawa, Mitsunori},
			author={Hara, Hisayuki},
			author={Takemura, Akimichi},
			journal={Annals of the Institute of Statistical Mathematics},
			volume={65},
			number={1},
			pages={191--212},
			year={2013}
		}
		\bib{OH98}{article}{
			title={Normal polytopes arising from finite graphs},
			author={Ohsugi, Hidefumi},
			author={Hibi, Takayuki},
			journal={Journal of Algebra},
			volume={207},
			number={2},
			pages={409--426},
			year={1998}
		}
		\bib{OH99}{article}{
			title={Toric ideals generated by quadratic binomials},
			author={Ohsugi, Hidefumi},
			author={Hibi, Takayuki},
			journal={Journal of Algebra},
			volume={218},
			number={2},
			pages={509--527},
			year={1999}
		}
		\bib{RTT12}{article}{
			title={Minimal generators of toric ideals of graphs},
			author={Reyes, Enrique},
			author={Tatakis, Christos},
			author={Thoma, Apostolos},
			journal={Advances in Applied Mathematics},
			volume={48},
			number={1},
			pages={64--78},
			year={2012}
		}
		\bib{ribbe1999defining}{article}{
			title={On the defining equations of multi-graded rings},
			author={Ribbe, J.},
			journal={Communications in Algebra},
			volume={27},
			number={3},
			pages={1393--1402},
			year={1999}
		}
		\bib{simis2003rees}{article}{
			title={Rees algebras of modules},
			author={Simis, Aron},
			author={Ulrich, Bernd},
			author={Vasconcelos, Wolmer V.},
			journal={Proceedings of the London Mathematical Society},
			volume={87},
			number={3},
			pages={610--646},
			year={2003}
		}
		\bib{SVV98}{article}{
			title={The integral closure of subrings associated to graphs},
			author={Simis, Aron},
			author={Vasconcelos, Wolmer V.},
			author={Villarreal, Rafael H.},
			journal={Journal of Algebra},
			volume={199},
			number={1},
			pages={281--289},
			year={1998}
		}
		\bib{Sosa14}{article}{
			title= {On the Koszulness of multi-Rees algebras of certain strongly stable ideals},
			author={Sosa, Gabriel},
			journal={arXiv:1406.2188},
			year={2014}
		}
		\bib{sturmfels1996grobner}{book}{
			title={Gr{\"o}bner bases and convex polytopes},
			author={Sturmfels, Bernd},
			series={University Lecture Series},
			volume={8},
			publisher={American Mathematical Society},
			year={1996}
		} 
		\bib{vasconcelosulrich1993rees}{article}{
			title={The equations of Rees algebras of ideals with linear presentation},
			author={Ulrich, Bernd},
			author={ and Vasconcelos, Wolmer V.},
			journal={Math. Z.},
			volume={214},
			pages={79--92},
			year={1993}
		}
		\bib{vasconcelos1991rees}{article}{
			title={On the equations of Rees algebras},
			author={Vasconcelos, Wolmer V.},
			journal={J. Reine Angew. Math.},
			volume={418},
			pages={189--218},
			year={1991}
		}
		\bib{V95}{article}{
			title={Rees algebras of edge ideals},
			author={Villarreal, Rafael H.},
			journal={Communications in Algebra},
			volume={23},
			number={9},
			pages={3513--3524},
			year={1995}
		}
	\end{biblist}
\end{bibdiv}

\end{document}